\begin{document}
\newtheorem{theorem}{Theorem}
\newtheorem{proposition}[theorem]{Proposition}
\newtheorem{conjecture}[theorem]{Conjecture}
\def\theconjecture{\unskip}
\newtheorem{corollary}[theorem]{Corollary}
\newtheorem{lemma}[theorem]{Lemma}
\newtheorem{sublemma}[theorem]{Sublemma}
\newtheorem{observation}[theorem]{Observation}
\theoremstyle{definition}
\newtheorem{definition}{Definition}
\newtheorem{notation}[definition]{Notation}
\newtheorem{remark}[definition]{Remark}
\newtheorem{question}[definition]{Question}
\newtheorem{questions}[definition]{Questions}
\newtheorem{example}[definition]{Example}
\newtheorem{problem}[definition]{Problem}
\newtheorem{exercise}[definition]{Exercise}

\numberwithin{theorem}{section} \numberwithin{definition}{section}
\numberwithin{equation}{section}

\def\earrow{{\mathbf e}}
\def\rarrow{{\mathbf r}}
\def\uarrow{{\mathbf u}}
\def\varrow{{\mathbf V}}
\def\tpar{T_{\rm par}}
\def\apar{A_{\rm par}}

\def\reals{{\mathbb R}}
\def\torus{{\mathbb T}}
\def\heis{{\mathbb H}}
\def\integers{{\mathbb Z}}
\def\naturals{{\mathbb N}}
\def\complex{{\mathbb C}\/}
\def\distance{\operatorname{distance}\,}
\def\support{\operatorname{support}\,}
\def\dist{\operatorname{dist}\,}
\def\Span{\operatorname{span}\,}
\def\degree{\operatorname{degree}\,}
\def\kernel{\operatorname{kernel}\,}
\def\dim{\operatorname{dim}\,}
\def\codim{\operatorname{codim}}
\def\trace{\operatorname{trace\,}}
\def\Span{\operatorname{span}\,}
\def\dimension{\operatorname{dimension}\,}
\def\codimension{\operatorname{codimension}\,}
\def\nullspace{\scriptk}
\def\kernel{\operatorname{Ker}}
\def\ZZ{ {\mathbb Z} }
\def\p{\partial}
\def\rp{{ ^{-1} }}
\def\Re{\operatorname{Re\,} }
\def\Im{\operatorname{Im\,} }
\def\ov{\overline}
\def\eps{\varepsilon}
\def\lt{L^2}
\def\diver{\operatorname{div}}
\def\curl{\operatorname{curl}}
\def\etta{\eta}
\newcommand{\norm}[1]{ \|  #1 \|}
\def\expect{\mathbb E}
\def\bull{$\bullet$\ }

\def\xone{x_1}
\def\xtwo{x_2}
\def\xq{x_2+x_1^2}
\newcommand{\abr}[1]{ \langle  #1 \rangle}

\newcommand{\Norm}[1]{ \left\|  #1 \right\| }
\newcommand{\set}[1]{ \left\{ #1 \right\} }
\def\one{\mathbf 1}
\def\whole{\mathbf V}
\newcommand{\modulo}[2]{[#1]_{#2}}
\def \essinf{\mathop{\rm essinf}}
\def\scriptf{{\mathcal F}}
\def\scriptg{{\mathcal G}}
\def\scriptm{{\mathcal M}}
\def\scriptb{{\mathcal B}}
\def\scriptc{{\mathcal C}}
\def\scriptt{{\mathcal T}}
\def\scripti{{\mathcal I}}
\def\scripte{{\mathcal E}}
\def\scriptv{{\mathcal V}}
\def\scriptw{{\mathcal W}}
\def\scriptu{{\mathcal U}}
\def\scriptS{{\mathcal S}}
\def\scripta{{\mathcal A}}
\def\scriptr{{\mathcal R}}
\def\scripto{{\mathcal O}}
\def\scripth{{\mathcal H}}
\def\scriptd{{\mathcal D}}
\def\scriptl{{\mathcal L}}
\def\scriptn{{\mathcal N}}
\def\scriptp{{\mathcal P}}
\def\scriptk{{\mathcal K}}
\def\frakv{{\mathfrak V}}
\def\C{\mathbb{C}}
\def\R{\mathbb{R}}
\def\Rn{{\mathbb{R}^n}}
\def\Sn{{{S}^{n-1}}}
\def\M{\mathcal{M}}
\def\N{\mathcal{N}}
\def\Q{{\mathbb{Q}}}
\def\Z{\mathbb{Z}}
\def\I{\mathcal{I}}
\def\D{\mathcal{D}}
\def\S{\mathcal{S}}
\def\supp{\operatorname{supp}}
\def\dist{\operatorname{dist}}
\def\essi{\operatornamewithlimits{ess\,inf}}
\def\esss{\operatornamewithlimits{ess\,sup}}

\author{Mingming Cao}
\address{Mingming Cao \\
         School of Mathematical Sciences \\
         Beijing Normal University \\
         Laboratory of Mathematics and Complex Systems \\
         Ministry of Education \\
         Beijing 100875 \\
         People's Republic of China}
\email{m.cao@mail.bnu.edu.cn}

\author{Qingying Xue}
\address{Qingying Xue\\
        School of Mathematical Sciences\\
        Beijing Normal University \\
        Laboratory of Mathematics and Complex Systems\\
        Ministry of Education\\
        Beijing 100875\\
        People's Republic of China}
\email{qyxue@bnu.edu.cn}

\thanks{The second author was supported partly by NSFC
(No. 11471041), the Fundamental Research Funds for the Central Universities (No. 2014KJJCA10) and NCET-13-0065.}

\keywords{Commutator; Paraproduct; Haar function; Bilinear Fractional integral Operators.}

\date{April 22, 2016.}
\title[Fractional Integral Operator]{A Revisit on Commutators of linear and bilinear Fractional Integral Operator}
\maketitle

\begin{abstract}
Let $I_{\alpha}$ be the linear and $\I_{\alpha}$ be the bilinear fractional integral operators. In the linear setting, it is known that the two-weight inequality holds for the first order commutators of $I_{\alpha}$. But the method can't be used to obtain the two weighted norm inequality for the higher order commutators of $I_{\alpha}$. In this paper, we first give an alternative proof for the first order commutators of $I_{\alpha}$. This new approach allows us to consider the higher order commutators. This was done by showing that the commutator $[b,I_{\alpha}]$ can be represented as a finite linear combination of some paraproducts. Then, by using the Cauchy integral theorem, we show that the two-weight inequality holds for the higher order commutators of $I_{\alpha}$. In the bilinear setting, we present a dyadic proof for the characterization between $BMO$ and the boundedness of $[b,\I_{\alpha}]$. Moreover, some bilinear
paraproducts are also treated in order to obtain the boundedness of $[b,\I_{\alpha}]$.
\end{abstract}

\section{Introduction}
It is well known that the bilinear fractional integral operator $\mathcal{I}_{\alpha}$ is defined by
\begin{equation*}
\I_{\alpha}(f_1,f_2)(x) = \int_{\Rn} \int_{\Rn} \frac{f_1(x - y_1) f_2(x-y_2)}{(|y_1|+|y_2|)^{2n - \alpha}} dy_1 dy_2,\ \ 0< \alpha < 2n,
\end{equation*}
Its dyadic model operator $\I_{\alpha}^{\D}$ is defined by
$$
\I_{\alpha}^{\D}(f_1,f_2)(x)
:= \sum_{Q \in \D} |Q|^{\frac{\alpha}{n}} \langle f_1 \rangle_Q \langle f_2 \rangle_Q \cdot \mathbbm{1}_Q(x),\ x \in \Rn,
$$
where $\D$ is a dyadic grid on $\Rn$. The corresponding linear operators are denoted by $I_{\alpha}$ and $I_{\alpha}^{\D}$ respectively.
In 1982, Chanillo \cite{Chan} first studied the commutators of $I_{\alpha}$ (or Riesz potential), and then he used it to characterize the $BMO$ space.
In 2007, Lacey \cite{L} reconsidered the boundedness of the commutators of $I_{\alpha}$ in one dimension. He showed that the commutator with Riesz potential is a sum of dyadic paraproducts. Moreover, the result was also extended to
the multi-parameter setting. The reason why the new method is valid lies in that $I_{\alpha}$ can be represented as the averaging of its
dyadic version $I_{\alpha}^{\D}$.

In two-weight setting, in 1985, Bloom \cite{B} considered the two-weight behavior for the commutators of Hilbert transform $H$. It was proved that for two weights $\mu,\lambda$ in $A_p$ class, the commutator $[b,H]$ is bounded from $L^p(\mu)$ to $L^p(\lambda)$ if and only if the function $b$ satisfies
$$
||b||_{BMO(\nu)}:=\sup_Q \frac{1}{\nu(Q)} \int_Q |b(x)-\langle b \rangle|dx < \infty,\quad\quad \nu=\mu^{\frac1p} \lambda^{-\frac1p}.
$$
It is worth pointing out that this characterization involves three independent objects $b,\mu,\lambda$.
Recently, Holmes, Lacey and Wick \cite{HLW-1} gave a modern proof of Bloom's theorem when $p=2$. In addition, Bloom's $BMO(\nu)$ space was redefined by equivalent formulations, which can be used to characterize the two-weight boundedness of certain dyadic paraproducts. Subsequently, the general ideas in \cite{HLW-1} were applied to Calder\'{o}n-Zygmund operator $T$. By using the dyadic representation given by Hyt\"{o}nen in \cite{H}, the authors \cite{HLW-2} cleverly reduced the commutator $[b,T]$ to $[b,\mathbb{S}^{i,j}]$ ($\mathbb{S}^{i,j}$ is the dyadic shift with parameters $(i,j)$), and thus the commutator of Riesz transform could be used to characterize the $BMO(\nu)$ space.

Recently, by using similar ideas as in \cite{HLW-2} and modern methods of dyadic analysis, Holmes and Wick \cite{HW} obtained the upper bounds for higher order commutators of Calder\'{o}n-Zygmund operators. It is worthy to point out that some new techniques sponsored by authors in \cite{HW} can be well used to deal with the very difficult part, that is, the remainder term. Later on, using the classical Cauchy integral argument, Hyt\"{o}nen \cite{H-higher} provided a brief and totally new proof for the result in \cite{HW}. Still more recently, the authors in \cite{HRS} studied the first order commutators of $I_{\alpha}$ and they showed that the norm
$\big\|[b,I_{\alpha}]\big\|_{L^p(\mu^p) \rightarrow L^q(\lambda^q)}$ is equivalent to the norm $||b||_{BMO(\nu)}$ for $\mu,\lambda \in A_{p,q}$ and $\nu=\mu \lambda^{-1}$. When it comes to the higher order commutators of $I_{\alpha}$, we need to seek a new approach since the techniques used in \cite{HRS}
and \cite{HW} are invalid. The main reason lies in that, unlike the dyadic shift $\mathbb{S}^{ij}$, it is not symmetric and pretty much more complicated when dealing with the remainder terms.
More works for the multi-parameter type commutators can be found in \cite{DO}, \cite{FL} and \cite{LPPW}.

Motivated by the above works, in this paper, our objects of investigation are the commutators of the linear and bilinear fractional integral operators.
First, we will show that the two-weight inequality holds for the higher order commutators of $I_{\alpha}$, defined by
\begin{equation*}
C_b^k(I_{\alpha})(f)(x) = \int_{\Rn} \frac{(b(x)-b(y))^kf( y) }{|x-y|^{n - \alpha}} dy,\ \ 0< \alpha < n.
\end{equation*}
For the first order commutators ($k=1$), we will give a new proof by means of dyadic fractional integral operator $I_{\alpha}^{\D}$. The good thing is that, in this case, the commutator $[b,I_{\alpha}^{\D}]$ can be written as a sum of paraproducts. 
Fortunately, the Cauchy integral theorem provides us an efficient way by the reason that it enable us to reduce the proof to the first order case.

The main result of this paper is:
\begin{theorem}\label{Theorem-Higher}
Let $ 0 < \alpha < n $, $ 1 < p < q < \infty $ with $\frac{1}{q} = \frac{1}{p} - \frac{\alpha}{n}$. Suppose that weights $\mu, \lambda \in A_{p,q}$ and the function
$b \in BMO \cap BMO(\nu)$ with $\nu=\mu \lambda^{-1}$. Then, for any $k \geq 1$, it holds that
$$
\big\| C_b^k(I_{\alpha}) \big\|_{L^p(\mu^p) \rightarrow L^q(\lambda^q)}
\leq C_{n,p,q,k}([\mu]_{A_{p,q}}, [\lambda]_{A_{p,q}}) ||b||_{BMO(\nu)} ||b||_{BMO}^{k-1},
$$
where $C_{n,p,q,k} (\cdot, \cdot)$ is monotone increasing in both variables.
\end{theorem}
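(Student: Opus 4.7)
The plan has two layers: first establish the case $k=1$ by a dyadic paraproduct argument, and then bootstrap to all $k \geq 2$ by a Cauchy integral trick around the first-order estimate.

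For $k = 1$, I would represent $I_\alpha$ as an average of its dyadic models $I_\alpha^{\D}$ in the spirit of \cite{L}. A Haar expansion of the kernel of $I_\alpha^{\D}$ decomposes $[b, I_\alpha^{\D}]$ into a finite sum of dyadic paraproducts of fractional type (and their adjoints), whose two-weight norm on $L^p(\mu^p) \to L^q(\lambda^q)$ is controlled by $\|b\|_{BMO(\nu)}$ alone, via a Bloom-type characterization in the spirit of \cite{HLW-1}. Averaging back over the grids yields
\[
\|[b, I_\alpha]\|_{L^p(\mu^p) \to L^q(\lambda^q)} \leq C_{1}([\mu]_{A_{p,q}}, [\lambda]_{A_{p,q}})\,\|b\|_{BMO(\nu)},
\]
with $C_1$ monotone increasing in both arguments.

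For $k \geq 2$, set $T_z := e^{zb} I_\alpha e^{-zb}$, entire in $z \in \mathbb{C}$. Since $b$ commutes with $e^{zb}$ pointwise, one checks $\frac{d}{dz} T_z = [b, T_z] = e^{zb}[b, I_\alpha]e^{-zb}$ and inductively $\frac{d^{k}}{dz^{k}} T_z\big|_{z=0} = \mathrm{ad}_b^{k}(I_\alpha) = C_b^{k}(I_\alpha)$, so by Cauchy's integral formula, for any $r > 0$,
\[
C_b^{k}(I_\alpha) = \frac{(k-1)!}{2\pi i}\oint_{|z|=r} \frac{e^{zb}[b, I_\alpha]e^{-zb}}{z^{k}}\,dz.
\]
Writing $t = \Re z$ and using $|e^{\pm zb}| = e^{\pm tb}$, the maps $e^{-zb}\colon L^p(\mu^p) \to L^p(\mu_t^p)$ and $e^{zb}\colon L^q(\lambda_t^q) \to L^q(\lambda^q)$ are isometries, where $\mu_t := \mu e^{tb}$, $\lambda_t := \lambda e^{tb}$. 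The decisive point is that the ratio is preserved, $\mu_t/\lambda_t = \nu$, so $BMO(\nu_t) = BMO(\nu)$ does not move with $t$. Using $b \in BMO$, a John--Nirenberg argument forces $\mu_t, \lambda_t \in A_{p,q}$ with $[\mu_t]_{A_{p,q}}, [\lambda_t]_{A_{p,q}}$ dominated by a function of the original $A_{p,q}$ characteristics provided $|t| \leq c_{n,p,q}/\|b\|_{BMO}$. Applying the $k=1$ estimate to the pair $(\mu_t, \lambda_t)$ and sandwiching with the two isometries gives
\[
\sup_{|z|=r}\|e^{zb}[b, I_\alpha]e^{-zb}\|_{L^p(\mu^p) \to L^q(\lambda^q)} \leq C_{1}([\mu_t]_{A_{p,q}},[\lambda_t]_{A_{p,q}})\,\|b\|_{BMO(\nu)}.
\]
Choosing the contour radius $r := c_{n,p,q}/\|b\|_{BMO}$ and substituting into the Cauchy integral produces the claimed bound, with the factor $\|b\|_{BMO}^{k-1}$ coming from $r^{-(k-1)}$.

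The main obstacle I anticipate is the uniform quantitative control of the shifted weights over the full disk $|t| \leq c_{n,p,q}/\|b\|_{BMO}$: one needs $[\mu_t]_{A_{p,q}}$ and $[\lambda_t]_{A_{p,q}}$ bounded by an explicit function of $[\mu]_{A_{p,q}}$ and $[\lambda]_{A_{p,q}}$ alone, uniformly in $t$ in that range, while simultaneously arranging that the $k=1$ constant $C_1$ depends \emph{monotonically} on its $A_{p,q}$ arguments so that the supremum along the contour is still majorized by its value at the worst $t$. Both ingredients are standard in nature but must be combined carefully so that the final constant inherits the monotonicity claimed in the statement of Theorem \ref{Theorem-Higher}.
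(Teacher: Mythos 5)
Your proposal follows essentially the same two-step route as the paper: a dyadic-paraproduct representation of $[b, I_\alpha^{\D}]$ (after averaging $I_\alpha$ over grids) for $k=1$, and the Cauchy-integral conjugation argument for $k\ge 2$, with your $T_z = e^{zb}I_\alpha e^{-zb}$ differentiated $k$ times being the same identity as the paper's $F(z)=e^{zb}[b,I_\alpha]e^{-zb}$ differentiated $k-1$ times. The weight-shift obstacle you raise at the end is exactly what the paper's lemma on shifted $A_{p,q}$ constants settles, and note that the admissible contour radius there involves the $A_\infty$ constants of $\mu^q$ and $\lambda^q$ as well, not only $\|b\|_{BMO}$.
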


In the bilinear setting, the bilinear commutators will be of the following forms
$$
[b,T]_1(f_1,f_2)(x):= b T(f_1,f_2)(x) - T(b f_1,f_2)(x),
$$
and
$$
[b,T]_2(f_1,f_2)(x):= b T(f_1,f_2)(x) - T(f_1,b f_2)(x).
$$
We present a dyadic proof for the following characterization between dyadic $BMO$ and the commutator of $\I_{\alpha}$. The continuous version was proved in \cite{C}.
\begin{theorem}\label{Theorem-Bilinear}
Let $ 0 < \alpha < 2n $, $ 1 < p_1, p_2 < \infty $, and $\frac{1}{q} = \frac{1}{p_1} + \frac{1}{p_2} - \frac{\alpha}{n}$. Suppose that $b \in L_{loc}^1(\Rn)$. Then
$$
\big\| [b,\I_\alpha^{\D}]_j \big\|_{L^{p_1}(\Rn) \times L^{p_2}(\Rn) \rightarrow L^q(\Rn)}
\simeq ||b||_{BMO_{\D}},\ \ j=1,2.
$$\end{theorem}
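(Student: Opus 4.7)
The theorem contains two inequalities, and I would treat them separately, with the nontrivial direction being the upper bound.

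For the upper bound $\|[b,\I_\alpha^{\D}]_j\|_{L^{p_1}\times L^{p_2}\to L^q} \lesssim \|b\|_{BMO_{\D}}$, the plan is to rewrite the commutator as a finite linear combination of bilinear dyadic paraproducts, each of whose norms is controlled by $\|b\|_{BMO_{\D}}$. Using the identity
\[
b(x)\langle f_1\rangle_Q - \langle bf_1\rangle_Q = (b(x)-\langle b\rangle_Q)\langle f_1\rangle_Q - \langle (b-\langle b\rangle_Q)f_1\rangle_Q,\qquad x\in Q,
\]
I would split $[b,\I_\alpha^{\D}]_1(f_1,f_2) = P_1(f_1,f_2) - P_2(f_1,f_2)$, where
\[
P_1(f_1,f_2)(x) = \sum_{Q\in\D}|Q|^{\frac{\alpha}{n}}(b(x)-\langle b\rangle_Q)\langle f_1\rangle_Q\langle f_2\rangle_Q\mathbbm{1}_Q(x),
\]
\[
P_2(f_1,f_2)(x) = \sum_{Q\in\D}|Q|^{\frac{\alpha}{n}}\langle (b-\langle b\rangle_Q)f_1\rangle_Q\langle f_2\rangle_Q\mathbbm{1}_Q(x).
\]
Expanding $(b-\langle b\rangle_Q)\mathbbm{1}_Q$ into its (finite-dimensional) Haar series in $Q$ produces paraproducts in which the Haar coefficients $\langle b,h_R^\epsilon\rangle$ appear explicitly, and each such paraproduct can be dominated, via a Carleson embedding adapted to the fractional scaling $\tfrac{1}{q} = \tfrac{1}{p_1}+\tfrac{1}{p_2}-\tfrac{\alpha}{n}$, by $\|b\|_{BMO_{\D}}$ times a bilinear dyadic square-function/maximal-function expression that is bounded from $L^{p_1}\times L^{p_2}$ to $L^q$. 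The symmetric estimate for $j=2$ is identical after swapping the roles of $f_1$ and $f_2$.

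For the lower bound $\|b\|_{BMO_{\D}}\lesssim \|[b,\I_\alpha^{\D}]_j\|$, I would test on localized functions. Fix $Q_0\in\D$ and set $f_1 = \mathbbm{1}_{Q_0}\operatorname{sgn}(b-\langle b\rangle_{Q_0})$ and $f_2 = \mathbbm{1}_{Q_0}$. For $x\in Q_0$, restricting the defining series of $\I_\alpha^{\D}$ to $Q=Q_0$ and using the commutator identity above produces the principal contribution
\[
|Q_0|^{\frac{\alpha}{n}-1}\int_{Q_0}|b(y)-\langle b\rangle_{Q_0}|\,dy,
\]
while the contributions from $Q\supsetneq Q_0$ and $Q\subsetneq Q_0$ either telescope under the commutator structure or are absorbed by the one-cube oscillation of $b$. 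Taking the $L^q(Q_0)$-norm and invoking $\|f_i\|_{L^{p_i}}=|Q_0|^{1/p_i}$, together with the scaling relation for $p_1,p_2,q$, yields
\[
\frac{1}{|Q_0|}\int_{Q_0}|b-\langle b\rangle_{Q_0}| \leq C\,\big\|[b,\I_\alpha^{\D}]_j\big\|_{L^{p_1}\times L^{p_2}\to L^q},
\]
with $C$ independent of $Q_0$, and taking the supremum in $Q_0$ completes this direction.

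The main obstacle is the bookkeeping inside the upper bound: once the Haar expansion is carried out, several structurally distinct bilinear paraproducts appear (according to whether $\langle b,h_R^\epsilon\rangle$ sits outside the averages or inside the mean $\langle(b-\langle b\rangle_Q)f_1\rangle_Q$, and according to which argument it pairs with), and each requires the correct interplay between the fractional weight $|Q|^{\alpha/n}$, the averages $\langle f_i\rangle_Q$ and the Carleson-measure characterization of $BMO_{\D}$. The ``mixed'' paraproduct in $P_2$, where the Haar coefficient of $b$ is tied to a single factor $f_1$ while the $f_2$-average remains intact, is the one where I would expect the tightest square-function argument to be needed.
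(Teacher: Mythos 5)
Your overall strategy---rewrite the commutator as a sum of bilinear paraproducts for the upper bound and test on cube-localized functions for the lower bound---is the same skeleton the paper uses, but the two halves differ in material ways, and the lower bound as sketched has a genuine gap.

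\emph{Upper bound.}\; The paper Haar-expands all three of $b,f_1,f_2$ simultaneously, computes $[h_P^\eta,\I_\alpha^\D]_1(h_{Q_1}^{\epsilon_1},h_{Q_2}^{\epsilon_2})$ in closed form, and thereby identifies a specific finite family of bilinear paraproducts $\Lambda_b,\Delta_b,\Xi_b,\Theta_b$; Proposition~\ref{Pro-Pra} (Section~5) then bounds each one by pairing against $g$, passing to a dyadic square function via Lemma~\ref{b-f-SD}, and invoking the known $L^{p_1}\times L^{p_2}\to L^q$ bounds for $\M_\alpha$ and $\I_\alpha$. Your proposed decomposition $[b,\I_\alpha^\D]_1=P_1-P_2$ by subtracting $\langle b\rangle_Q$ on each cube is a legitimate alternative starting point, and expanding $b$ alone in Haar series would produce a different but comparable family of paraproducts. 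The substance of the proof, though, is precisely the paraproduct estimates you defer to ``a Carleson embedding adapted to the fractional scaling''; no such black box is available off the shelf here, and in the paper this is the content of the whole of Section~5. So your upper bound is a plausible reorganization but stops where the paper's work begins.

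\emph{Lower bound.}\; This is where there is an actual gap. With $f_1=\mathbbm{1}_{Q_0}\operatorname{sgn}(b-\langle b\rangle_{Q_0})$ and $f_2=\mathbbm{1}_{Q_0}$, the terms with $Q\subsetneq Q_0$ do not telescope and are not controlled by the single-cube oscillation of $b$ on $Q_0$: they involve $\langle(b-\langle b\rangle_Q)f_1\rangle_Q$ and $b(x)-\langle b\rangle_Q$ for small $Q$, i.e.\ oscillations of $b$ at scales finer than $Q_0$, and these have no a~priori bound without already knowing $\|b\|_{BMO_\D}<\infty$---which is the very thing you are trying to prove. The paper sidesteps this circularity by taking $f_1=f_2=\mathbbm{1}_J$ (so $\langle f_i\rangle_Q=1$ for all $Q\subset J$), decomposing $b=b'+b''$ with $b''$ constant on $J$ so that $[b'',\I_\alpha^\D]_1(\mathbbm{1}_J,\mathbbm{1}_J)=0$ on $J$, computing $b'\I_\alpha^\D(\mathbbm{1}_J,\mathbbm{1}_J)$ and $\I_\alpha^\D(b'\mathbbm{1}_J,\mathbbm{1}_J)$ as explicit Haar series, and using John--Nirenberg in the form $\|\sum_{I\subset J}\langle b,h_I^\epsilon\rangle h_I^\epsilon\|_{L^q}\simeq|J|^{1/q}$ under the normalization $\|b\|_{BMO_\D^2}=1$ with $J$ nearly extremal. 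If you want a self-contained argument you should adopt this split; otherwise you would need to quantify the $Q\subsetneq Q_0$ contributions some other way, and ``absorbed by the one-cube oscillation'' does not accomplish that.

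Also, a minor point: $(b-\langle b\rangle_Q)\mathbbm{1}_Q$ has an infinite, not finite, Haar expansion inside $Q$, so the phrase ``finite-dimensional Haar series in $Q$'' should be removed or corrected.
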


\section{Preliminaries}
In order to show our results, we here present some definitions and lemmas, which will be needed later.
\subsection{Haar functions}
Let $h_Q^{\epsilon}$ be an $L^2$ normalized Haar function related to $Q \in \D$, where $\D$ is a dyadic grid on $\Rn$, and $\epsilon=(\epsilon_1,\ldots,\epsilon_n) \in \{0, 1\}^n$. With this we mean that $h_Q^\epsilon$, $Q = I_1 \times \cdots \times I_n$, is one of the $2^n$ functions $h_Q^\epsilon$, defined by
$$
 h_Q^\epsilon = h_{I_1}^{\epsilon_1} \otimes \cdots \otimes h_{I_n}^{\epsilon_n},
$$
where the one-dimension Haar functions :
$$
h_{I}^0 := |I|^{-\frac12}(\mathbbm{1}_{I_{-}} - \mathbbm{1}_{I_{+}}),\
h_{I}^1 := |I|^{-\frac12} \mathbbm{1}_{I}.
$$
Here $I_{-}$ and $I_{+}$ are the left and right halves of the interval $I$ respectively. For convenience, we write $\epsilon = 1$ when $\epsilon_i=1$ for all $i$. If $\epsilon \neq 1$, the Haar function is cancellative : $\int_\Rn h_Q^{\epsilon} = 0$. All the cancellative Haar functions on a fixed dyadic grid $\D$ form an orthonormal basis of $L^2(\Rn)$. If $f \in L^2(\Rn)$, we may thus write
\begin{equation}\label{decomposition}
f = \sum_{Q \in \D,\epsilon \neq 1} \langle f,h_Q^{\epsilon} \rangle h_Q^{\epsilon},
\end{equation}
where $\langle \cdot,\cdot \rangle$ denotes the usual inner product on $L^2(\Rn)$. Moreover, we have the basic facts as follows:
\begin{equation}\label{average}
\langle f \rangle_Q = \sum_{\substack{P \in \D:P \supsetneq Q \\ \epsilon \neq 1}} \langle f,h_P^{\epsilon} \rangle h_P^{\epsilon}(Q).
\end{equation}
And there holds that
$$
h_Q^{\epsilon} h_Q^{\eta} = |Q|^{-\frac12}h_Q^{\epsilon + \eta},
$$
where $\epsilon + \eta$ is defined by
\begin{equation*}
(\epsilon + \eta)_i : = \delta_{(\epsilon_i,\eta_i)}=
\begin{cases}
0, \ &\text{if } \epsilon_i \neq \eta_i \\
1, \ &\text{if } \epsilon_i = \eta_i .
\end{cases}
\end{equation*}
\subsection{Multiple $A_{(\vec{p},q)}$ weights}
Suppose that $\vec{\omega} = (\omega_1, \cdots, \omega_m)$ and each $\omega_i$ $(i=1,\cdots,m)$ is a nonnegative function on $\Rn$.
We say that $\vec{\omega}$ satisfies the $A_{({\vec{p}},q)}$ condition, written $\vec{\omega} \in A_{(\vec{p},q)}$, if it satisfies
$$
\sup_Q \left( \frac{1}{|Q|} \int_Q \nu_{\vec{\omega}}^q dx \right)^{1/q}
\prod_{i=1}^m \left( \frac{1}{|Q|} \int_Q \omega_i^{-p_i'} dx \right)^{1/{p_i'}} < \infty,
$$
where $\nu_{\vec{\omega}} = \prod_{i=1}^m \omega_i$. If $p_i = 1$, then $(\frac{1}{Q} \int_Q \omega_i^{-p_i'})^{1/{p_i'}}$ is understood as $(\inf_Q \omega_i)^{-1}$.

The multilinear fractional type maximal operator $\mathcal{M}_\alpha$, which is defined by
$$
\mathcal{M}_\alpha(\vec{f})(x) = \sup_{Q\ni x} \prod_{i=1}^m \frac{1}{|Q|^{1-\frac{\alpha}{mn}}} \int_Q |f_i(y_i)| \;dy_i,   \quad\quad \text{for} \ \ 0< \alpha < mn
$$
where the supremum is taken over all cubes $Q$ containing $x$ in $\Rn$ with the sides parallel to the axes.
In addition,

We summarize some known results of $\mathcal{M}_\alpha $ and $\mathcal{I}_\alpha $ \cite{CX} as follows.
\begin{theorem}\label{M-alpha-I-alpha}
Let $ 0 \leq \alpha < mn $, $ 1< p_1, \cdots, p_m < \infty $, $ \frac{1}{p} = \frac{1}{p_{1}} + \cdots + \frac{1}{p_{m}} $ and
$\frac{1}{q} = \frac{1}{p} - \frac{\alpha}{n}$. Then, $ \vec{\omega} \in A_{(\vec{p},q)} $ if and only if either of the following two inequalities hold:
\begin{eqnarray}
\label{M-alpha}{\big\| \mathcal{M}_\alpha (\vec{f}) \big\|}_{L^q({\nu_{\vec{\omega} }}^q)}
& \lesssim & \prod_{i = 1}^m{\big\|f_i\big\|}_{L^{p_i}({\omega_i}^{p_i})}; \\
\label{I-alpha}{\big\| \mathcal{I}_\alpha (\vec{f}) \big\|}_{L^q({\nu_{\vec{\omega} }}^q)}
& \lesssim & \prod_{i = 1}^m {\big\| f_i \big\|}_{L^{p_i}({\omega_i}^{p_i})}.
\end{eqnarray}
\end{theorem}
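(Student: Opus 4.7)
I plan to close the equivalence as a cycle $\vec\omega \in A_{(\vec p,q)} \Rightarrow$ \eqref{M-alpha} $\Rightarrow$ \eqref{I-alpha} $\Rightarrow \vec\omega \in A_{(\vec p,q)}$. For the last arrow, fix a cube $Q$ and test \eqref{I-alpha} with $f_i = \omega_i^{-p_i'} \mathbbm{1}_Q$. A trivial size estimate on the kernel (on $Q^m$ it is bounded by a constant times $|Q|^{(mn-\alpha)/n}$) gives the pointwise lower bound $\mathcal{I}_\alpha(\vec f)(x) \gtrsim |Q|^{\alpha/n - m} \prod_i \int_Q \omega_i^{-p_i'}$ for $x \in Q$. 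Inserting this into \eqref{I-alpha}, dividing by $\prod_i (\int_Q \omega_i^{-p_i'})^{q/p_i}$, and invoking the scaling identity $\tfrac1q + \sum_i \tfrac{1}{p_i'} = m - \tfrac{\alpha}{n}$ extracts exactly the $A_{(\vec p,q)}$ condition. The same test function also extracts $A_{(\vec p,q)}$ from \eqref{M-alpha} since $\mathcal{M}_\alpha(\vec f)(x) \gtrsim |Q|^{\alpha/n - m} \prod_i \int_Q f_i$ on $Q$.

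\textbf{Sufficiency for $\mathcal{M}_\alpha$.} For $A_{(\vec p,q)} \Rightarrow$ \eqref{M-alpha}, after the standard $3^n$-shifted-grid reduction, it suffices to estimate the dyadic variant $\mathcal{M}_\alpha^{\mathcal D}$. For each $k \in \mathbb Z$ let $\mathcal Q_k$ be the family of maximal dyadic cubes $Q$ with $\prod_i |Q|^{\alpha/(mn) - 1} \int_Q |f_i| > 2^k$; these are pairwise disjoint, the super-level set $\{\mathcal{M}_\alpha^{\mathcal D}(\vec f) > 2^k\}$ is contained in $\cup_{Q \in \mathcal Q_k} Q$, and so
\begin{equation*}
\|\mathcal{M}_\alpha^{\mathcal D}(\vec f)\|_{L^q(\nu_{\vec\omega}^q)}^q \lesssim \sum_k 2^{kq}\, \nu_{\vec\omega}^q\Bigl(\bigcup_{Q \in \mathcal Q_k} Q\Bigr).
\end{equation*}
Using $A_{(\vec p,q)}$ to replace $\nu_{\vec\omega}^q(Q)$ by $|Q|^q \prod_i (|Q|^{-1} \int_Q \omega_i^{-p_i'})^{-q/p_i'}$, then applying the multilinear H\"older inequality with exponents $q/p_i$ together with a reverse-H\"older self-improvement of $A_{(\vec p,q)}$ to convert $\omega_i^{-p_i'}$-averages into $\omega_i^{p_i}$-averages, the sum factors into $\prod_i \|f_i\|_{L^{p_i}(\omega_i^{p_i})}^q$.

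\textbf{Sufficiency for $\mathcal{I}_\alpha$, and the main obstacle.} For \eqref{M-alpha} $\Rightarrow$ \eqref{I-alpha}, I would use a Welland-type pointwise comparison: for $0 < \eps < \min(\alpha, mn - \alpha)$, splitting the kernel according to whether $|y_1| + \cdots + |y_m|$ is less than or greater than an auxiliary radius $r$ and optimizing in $r$ yields
\begin{equation*}
\mathcal{I}_\alpha(\vec f)(x) \lesssim \bigl(\mathcal{M}_{\alpha - \eps}(\vec f)(x)\bigr)^{1/2} \bigl(\mathcal{M}_{\alpha + \eps}(\vec f)(x)\bigr)^{1/2}.
\end{equation*}
Since $A_{(\vec p,q)}$ is an open condition, $\vec\omega \in A_{(\vec p, q_\pm)}$ with $1/q_\pm = 1/p - (\alpha \pm \eps)/n$ for $\eps$ small, so the maximal bound from the previous paragraph applies at both shifted exponents, and Cauchy--Schwarz in $L^{2q}(\nu_{\vec\omega}^q)$ closes the argument. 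The main obstacle is the level-set estimate in Paragraph 2: the joint weight $\nu_{\vec\omega}$ controls only a coupled product of the $\omega_i^{-p_i'}$-averages, so factoring the weighted sum into $m$ independent pieces requires a delicate combination of multilinear H\"older with the self-improvement of $A_{(\vec p,q)}$. A cleaner route would be to dominate $\mathcal{M}_\alpha^{\mathcal D}$ pointwise by a sparse fractional form and then verify a two-weight testing condition, which is immediate from $A_{(\vec p,q)}$.
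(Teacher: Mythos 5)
Note first that the paper does not prove Theorem~\ref{M-alpha-I-alpha}: it is imported verbatim from \cite{CX} (``We summarize some known results of $\mathcal{M}_\alpha$ and $\mathcal{I}_\alpha$ \cite{CX} as follows''), so there is no in-paper argument to compare your proposal against. I therefore assess it on its own terms. Your cycle $A_{(\vec p,q)} \Rightarrow \eqref{M-alpha} \Rightarrow \eqref{I-alpha} \Rightarrow A_{(\vec p,q)}$ is the standard scheme, and the necessity step is sound and complete: testing with $f_i = \omega_i^{-p_i'}\mathbbm{1}_Q$, the kernel lower bound $\gtrsim |Q|^{\alpha/n-m}$ on $Q^m$, and the scaling identity $\tfrac1q + \sum_i \tfrac{1}{p_i'} = m - \tfrac{\alpha}{n}$ do reproduce the $A_{(\vec p,q)}$ constant exactly.

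Both sufficiency directions, however, rest on an ingredient you allude to but never supply: the structural fact (proved in \cite{CX}, extending the linear Muckenhoupt--Wheeden result and the multilinear $A_{\vec p}$ characterization) that $\vec\omega \in A_{(\vec p,q)}$ forces each $\omega_i^{-p_i'}$ and $\nu_{\vec\omega}^q$ \emph{individually} to lie in $A_\infty$. In your Paragraph~2, the phrase ``multilinear H\"older together with reverse-H\"older self-improvement of $A_{(\vec p,q)}$'' is exactly where this is needed: the $A_{(\vec p,q)}$ constant by itself only controls a \emph{coupled} product of averages over a single cube, and without the separate $A_\infty$ (reverse-H\"older) exponents for each $\omega_i^{-p_i'}$ there is no gain with which to run a Carleson/principal-cubes summation over the stopping family $\mathcal{Q}_k$ and factor the level-set sum into $\prod_i \|f_i\|_{L^{p_i}(\omega_i^{p_i})}^q$. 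The Welland argument for $\mathcal{I}_\alpha$ has the same hidden dependency: the claim that $\vec\omega \in A_{(\vec p,q_\pm)}$ for small $\eps$ is not an a priori ``openness'' statement; in the multilinear setting it again follows from those individual $A_\infty$ properties. Your suggested alternative via sparse domination does not evade this either --- the two-weight testing condition for the sparse fractional form is precisely \emph{not} immediate from the raw $A_{(\vec p,q)}$ inequality, and verifying it requires the same $A_\infty$ input. In short, the roadmap is correct and the necessity direction is complete, but the one lemma you treat as a footnote is the genuine content of the sufficiency half of the theorem.
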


For a dyadic grid $\D$, the dyadic square function is defined by
$$
S_{\D}f := \left(\sum_{Q \in \D,\epsilon \neq 1} \big| \langle f,h_Q^{\epsilon} \rangle\big|^2 \frac{\mathbbm{1}_Q}{|Q|}\right)^{\frac12}.
$$
\begin{lemma}(\cite{CM})\label{SD-f}
If $w \in A_p$, then there holds that
$$
\big\|S_{\D}f\big\|_{L^p(w)}
\lesssim [w]_{A_p}^{\max\{\frac12,\frac{1}{p-1}\}} ||f||_{L^p(w)}.
$$
\end{lemma}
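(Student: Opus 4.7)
The plan is to reduce the weighted $L^p$ estimate for $S_{\D}$ to a sharp weighted bound for an $\ell^2$-sparse operator, then invoke extrapolation. The key structural ingredient is that $S_{\D}$ admits a pointwise sparse domination, after which the weighted estimate factors through a purely sparse object.

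The first step is to prove that, for every $f \in L_{loc}^1(\Rn)$, there exists a sparse family $\S \subset \D$ depending on $f$ such that
$$
S_{\D} f(x) \lesssim \Bigg( \sum_{Q \in \S} \langle |f| \rangle_Q^{2}\, \mathbbm{1}_Q(x) \Bigg)^{1/2}.
$$
I would carry this out via a principal-cubes stopping-time construction on the averages of $|f|$: stop when the average doubles, decompose the Haar coefficients according to the stopping generation, and control the square function contribution from each stopping interval by the local unweighted $L^2$ boundedness of $S_{\D}$ together with the Carleson packing of the stopping cubes.

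The second step is to apply the sharp weighted estimate for the $\ell^2$-sparse form,
$$
\Bigg\| \Bigg( \sum_{Q \in \S} \langle |f| \rangle_Q^{2}\, \mathbbm{1}_Q \Bigg)^{1/2} \Bigg\|_{L^p(w)} \lesssim [w]_{A_p}^{\max\{\frac12,\,\frac{1}{p-1}\}} \|f\|_{L^p(w)}.
$$
I would obtain this by establishing a sharp endpoint and then applying the Dragi\v{c}evi\'c--Grafakos--Pereyra--Petermichl sharp Rubio de Francia extrapolation theorem. Starting from the sharp $L^3(w)$ endpoint $[w]_{A_3}^{1/2}$ (which follows from a testing-condition / weighted Carleson embedding argument on the sparse $\ell^2$ form, with Carleson constant controlled by $[w]_{A_3}$), the sharp extrapolation formula $s\,\max\{1,(p_0-1)/(p-1)\}$ with $(p_0,s) = (3,1/2)$ outputs exactly $\max\{\frac12,\,\frac{1}{p-1}\}$ for every $1 < p < \infty$, which is the stated exponent.

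The main obstacle is the sharp quantitative $A_p$ dependence at the endpoint. The unweighted $L^p$-boundedness of $S_{\D}$ is a routine consequence of Littlewood--Paley theory, but pinning down the correct power of $[w]_{A_p}$ demands either a Bellman-function argument in the spirit of Wittwer's sharp $A_2$ bound for the square function, or the pointwise sparse-domination machinery together with the sharp weighted theory of $\ell^2$-sparse forms. Once this endpoint is secured, the pointwise sparse bound of Step 1 and sharp extrapolation package the $L^p(w)$ estimate with the claimed exponent $\max\{\frac12,\,\frac{1}{p-1}\}$.
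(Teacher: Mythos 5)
Note first that the paper does not prove this lemma: it is stated as a known theorem and attributed to \cite{CM}, so there is no internal argument to compare your attempt against. That said, your outline is a valid modern route, and you have correctly located the essential structural point: the base case for sharp (Dragi\v{c}evi\'c--Grafakos--Pereyra--Petermichl) extrapolation must be $p_0=3$ with exponent $s=\frac{1}{2}$, not the more accessible $p_0=2$ with $s=1$; extrapolating from $(2,1)$ only yields $[w]_{A_p}^{\max\{1,\,1/(p-1)\}}$, which is suboptimal for $p>2$, whereas $(3,\frac{1}{2})$ gives $\frac12\max\{1,\frac{2}{p-1}\}=\max\{\frac12,\frac{1}{p-1}\}$ as claimed. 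The route behind \cite{CM} (and Lerner, Cruz-Uribe--Martell--P\'erez) reaches the same $A_3$ endpoint via a local mean oscillation decomposition of $S_{\D}$; your pointwise $\ell^2$-sparse domination is an equivalent, by now standard, substitute for that decomposition. The only place your sketch is thin is the $A_3$ endpoint itself: the quantity $\|(\sum_{Q\in\S}\langle|f|\rangle_Q^2\mathbbm{1}_Q)^{1/2}\|_{L^3(w)}$ is not a quadratic form, so the phrase ``testing condition / weighted Carleson embedding'' conceals real duality bookkeeping (one sets $\sigma=w^{-1/2}$, writes $f=g\sigma$, and dualizes $L^{3/2}(w)$ against $L^3(w)$), and you must verify that the constant which emerges is exactly $[w]_{A_3}^{1/2}$ rather than some mixed $A_3$--$A_\infty$ quantity. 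Once that endpoint is pinned down, the rest of your argument closes.
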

\subsection{Weighted $BMO$}
Let $w$ be a weight on $\Rn$. The weighted $BMO$ space $BMO(w)$ is defined to be the space of all locally integrable functions $b$ satisfying
$$
||b||_{BMO(w)}:=\sup_{Q: \text{ cubes in } \Rn}\frac{1}{w(Q)}\int_{Q}|b(x)-\langle b \rangle_Q|dx < \infty.
$$
If $w=1$, we simply denote it by $BMO$. If $w \in A_p$, there holds
\begin{equation}\label{BMO-BMO}
||b||_{BMO} \leq ||b||_{BMO^r(w)} \leq c_{n,p,[w]_{A_p}} ||b||_{BMO(w)},
\end{equation}
where $w'=w^{1-p'}$ and
$$
||b||_{BMO^r(w)}^r := \sup_{Q: \text{ cubes in } \Rn}\frac{1}{w(Q)}\int_{Q}|b(x)-\langle b \rangle_Q|^r dw'.
$$
Given a dyadic grid $\D$, we define the dyadic versions of these spaces, $BMO_{\D}(w)$ and $BMO_{\D}^r(w)$, by taking the supremum over $Q \in \D$.
The inequality $(\ref{BMO-BMO})$ in the dyadic setting also holds. Moreover, using the equations $(\ref{decomposition})$ and $(\ref{average})$, we obtain the following form of dyadic $BMO$ :
$$
||b||_{BMO_{\D}^2}=\sup_{J \in \D}\bigg(\frac{1}{|J|} \sum_{\substack{I:I \subset J \\ \epsilon \neq 1}} |\langle b,h_I^{\epsilon} \rangle|^2 \bigg)^{1/2}.
$$

The following inequality proved in \cite{HLW-2} will be very useful for our proofs.
\begin{lemma}\label{b-f-SD}
If $w \in A_2$, then there holds that
$$
|\langle b,f \rangle|
\lesssim [w]_{A_2} ||b||_{BMO_{\D}^2(w)} ||S_{\D}f||_{L^1(w)}.
$$
\end{lemma}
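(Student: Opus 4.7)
The strategy is to view the desired estimate as a manifestation of weighted dyadic $H^1$--$BMO$ duality, implemented through Haar coefficients together with a weighted stopping-time decomposition of $f$. Since $b \in BMO_\D^2(w)$ is locally integrable, I first expand in the cancellative Haar basis and take absolute values:
$$|\langle b, f\rangle| \leq \sum_{Q \in \D,\ \epsilon \neq 1} |\langle b, h_Q^\epsilon\rangle| \, |\langle f, h_Q^\epsilon\rangle|.$$
Writing $|Q| = \int_Q 1\,dx$ to re-express each summand as an integral over $Q$ and applying Cauchy--Schwarz in the inner sum over the cubes $Q \ni x$ yields the pointwise square-function majorization
$$|\langle b, f\rangle| \leq \int_{\Rn} S_\D b(x) \, S_\D f(x) \, dx.$$

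The second step is to bring $w$ into the estimate so that only $\|S_\D f\|_{L^1(w)}$ appears on the right. For this I would perform a Calder\'on--Zygmund-type stopping-time argument on $S_\D f$ adapted to $w$. Let $\mathcal{F}$ be the family of principal cubes for $S_\D f$ (taking maximal cubes on which the $w$-weighted dyadic maximal function of $S_\D f$ exceeds a geometric sequence of thresholds), let $\pi(Q)$ denote the minimal $F \in \mathcal{F}$ containing $Q$, and let $E_F := F \setminus \bigcup_{F' \in \mathcal{F},\, F' \subsetneq F} F'$ be the associated disjoint sawtooth regions, which satisfy $w(E_F) \simeq w(F)$ by construction. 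Decompose $f = \sum_{F \in \mathcal{F}} f_F$ where $f_F$ is the Haar projection onto the cubes $Q$ with $\pi(Q) = F$. On each piece, Cauchy--Schwarz against the dual measures $w^{-1}\,dx$ and $w\,dx$ gives
$$|\langle b, f_F\rangle| \leq \left(\int_F |b - \langle b\rangle_F|^2 w^{-1}\,dx\right)^{1/2} \|f_F\|_{L^2(w)} \leq \|b\|_{BMO_\D^2(w)}\, w(F)^{1/2}\,\|f_F\|_{L^2(w)},$$
directly from the definition of $\|b\|_{BMO_\D^2(w)}$. The stopping inequality controls $w(F)^{1/2}\|f_F\|_{L^2(w)}$ by $\int_{E_F} S_\D f\cdot w\,dx$, so summing over $F \in \mathcal{F}$ and using disjointness of $\{E_F\}$ produces $\|S_\D f\|_{L^1(w)}$ on the right.

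The main obstacle lies in the stopping-time construction: specifically, establishing the Carleson packing estimate $\sum_{F' \subseteq F,\, F' \in \mathcal{F}} w(F') \lesssim [w]_{A_2}\,w(F)$ and, in parallel, the localized $L^2(w)$ bound for the pieces $f_F$. This is the single place where the linear dependence on $[w]_{A_2}$ enters the final estimate, and it rests ultimately on the defining $A_2$ inequality $w(Q)\,w^{-1}(Q) \leq [w]_{A_2} |Q|^2$ combined with the stopping criterion. All remaining constants are absolute or depend only on dimension, so the proposed route produces exactly the quantitative bound asserted in the lemma.
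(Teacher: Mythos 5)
The paper does not actually prove this lemma; it is cited verbatim from Holmes--Lacey--Wick \cite{HLW-2}, so there is no in-paper argument to compare against, only the statement. Your overall framework --- an atomic-type decomposition of $f$ via a stopping time, then local $BMO_\D^2(w)$--$L^2(w)$ duality on each stopping block by Cauchy--Schwarz against the dual pair $(w^{-1}\,dx,\, w\,dx)$ --- is the right skeleton, and the local estimate
$|\langle b,f_F\rangle|\le \|b\|_{BMO^2_\D(w)}\,w(F)^{1/2}\,\|f_F\|_{L^2(w)}$
is correct since $f_F$ is supported on $F$ with mean zero. (The opening reduction to $\int S_\D b\cdot S_\D f$ is a detour: you never use it.)

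The genuine gap is in the crucial unproved claim that ``the stopping inequality controls $w(F)^{1/2}\|f_F\|_{L^2(w)}$ by $\int_{E_F}S_\D f\cdot w\,dx$.'' Two problems arise. First, the stopping family you propose is built from level sets of the \emph{$w$-weighted dyadic maximal function} $M_w^\D(S_\D f)$; but the dyadic maximal function fails to be bounded on $L^1(w)$ (only weak $(1,1)$), so the quantity you would ultimately need, namely $\sum_F \lambda_F\,w(F)\simeq\|M_w^\D(S_\D f)\|_{L^1(w)}$, is \emph{not} controlled by $\|S_\D f\|_{L^1(w)}$. The stopping should instead be built directly on the level sets $\Omega_k=\{S_\D f>2^k\}$ (with $f_F$ the Haar projection onto $\{Q\subset Q^k_j:\ k(Q)=k\}$), so that the final sum is $\sum_k 2^k\,w(\Omega_k)\simeq\|S_\D f\|_{L^1(w)}$ by the layer-cake formula, with no Carleson packing or sawtooth-mass lemma needed. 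Second, even with that fix, passing from $w(F)^{1/2}\|f_F\|_{L^2(w)}$ to $2^k\,w(F)$ is not automatic: you need the pointwise bound $S_\D f_F\le 2^{k+1}$ on all of $F$ (which uses the maximality of the $Q^{k+1}_i$ and that every $Q$ contributing to $f_F$ on a point of $\Omega_{k+1}$ must strictly contain a stopping child), and then the weighted Littlewood--Paley lower bound $\|g\|_{L^2(w)}\lesssim [w]_{A_2}\,\|S_\D g\|_{L^2(w)}$, dual to Lemma \ref{SD-f}. This last inequality, not ``Carleson packing combined with the defining $A_2$ inequality,'' is where the $[w]_{A_2}$ factor actually enters the estimate; your explanation of the source of the $A_2$ constant is therefore misplaced, and the corresponding localized $L^2(w)$ bound is asserted rather than derived.
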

\section{The commutator $[b,I_{\alpha}^{\D}]$ revisited}
In this section, we will demonstrate the first order case in Theorem $\ref{Theorem-Higher}$. To finish this work, we first need the fact that
$I_{\alpha}$ can be recovered from $I_{\alpha}^{\D}$, which was shown in \cite{HRS}, and essentially goes back to \cite{H} and \cite{PTV}.
Thus, it is enough to prove the result holds for $[b,I_{\alpha}^{\D}]$.

We introduce the following paraproduct operators.
\begin{eqnarray*}
\Pi_b f &:=& \sum_{Q \in \D, \epsilon \neq 1} \langle b,h_{Q}^{\epsilon} \rangle \langle f \rangle_Q  h_{Q}^{\epsilon}, \\
\Gamma_b f &:=& \sum_{Q \in \D} \sum_{\substack{\epsilon,\eta \neq 1 \\ \epsilon \neq \eta}} \langle b, h_Q^{\epsilon} \rangle \langle f,h_{Q}^{\eta} \rangle \frac{1}{\sqrt{|Q|}} h_Q^{\epsilon+\eta}, \\
B_k(b,f) &:=& \sum_{Q \in \D}  \sum_{\epsilon,\eta \neq 1} \langle b, h_Q^{\epsilon} \rangle \langle f,h_{Q^{(k)}}^{\eta} \rangle h_{Q^{(k)}}^{\eta} h_Q^{\epsilon}.
\end{eqnarray*}
where $Q^{(k)}$ denotes the $k$-th dyadic ancestor of $Q$. Let $\Pi_b^*$ be the adjoint of $\Pi_b$ in $L^2(\Rn)$, then
$$
\Pi_b^* f = \sum_{Q \in \D, \epsilon \neq 1} \langle b,h_{Q}^{\epsilon} \rangle \langle f,h_{Q}^{\epsilon} \rangle \frac{\mathbbm{1}_Q}{|Q|}.
$$
Moreover, it is easy to see that $B_0(b,f) = \Pi_b^*f + \Gamma_b f$. We need the following lemma:
\begin{lemma}\label{combination}
The commutator $[b,I_{\alpha}^{\D}]$ is a linear combination of the four terms :
$$
\Pi_b \circ I_{\alpha}^{\D},\ I_{\alpha}^{\D} \circ \Pi_b^*,\ \Pi_b^* \circ I_{\alpha}^{\D},\
\sum_{k=1}^{\infty} 2^{-k \alpha/n} B_k(b, \cdot) \circ I_{\alpha}^{\D}.
$$
\end{lemma}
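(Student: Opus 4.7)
The plan is to work entirely at the level of Haar expansions. Writing $b = \sum_{Q, \epsilon \neq 1}\langle b, h_Q^{\epsilon}\rangle h_Q^{\epsilon}$, linearity reduces the claim to analyzing the single-Haar commutator
$$
[h_Q^{\epsilon}, I_{\alpha}^{\D}] f(x) = \sum_{P \in \D} |P|^{\alpha/n}\bigl[h_Q^{\epsilon}(x)\langle f\rangle_P - \langle h_Q^{\epsilon} f\rangle_P\bigr]\mathbbm{1}_P(x)
$$
and then summing against $\langle b, h_Q^{\epsilon}\rangle$. I would split the inner $P$-sum by the nesting relation between $P$ and $Q$. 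The case $P \cap Q = \emptyset$ is trivially zero, and the case $P \subsetneq Q$ is also zero because $h_Q^{\epsilon}$ is constant on $P$, so that the bracket reduces to $h_Q^{\epsilon}(P)\langle f\rangle_P - h_Q^{\epsilon}(P)\langle f\rangle_P = 0$. For $P = Q$, orthonormality gives $\langle h_Q^{\epsilon} f\rangle_Q = \langle f, h_Q^{\epsilon}\rangle/|Q|$; for $P \supsetneq Q$, the fact that $h_Q^{\epsilon}$ is supported in $Q \subsetneq P$ gives $\langle h_Q^{\epsilon} f\rangle_P = \langle f, h_Q^{\epsilon}\rangle/|P|$.

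Summing over $Q$ against $\langle b, h_Q^{\epsilon}\rangle$ reorganizes the surviving contributions into two structural pieces: one of the form $\sum_{Q, \epsilon}\langle b, h_Q^{\epsilon}\rangle h_Q^{\epsilon}(x) \sum_{P \supseteq Q}|P|^{\alpha/n}\langle f\rangle_P$, and one of the form $\sum_{Q, \epsilon}\langle b, h_Q^{\epsilon}\rangle \langle f, h_Q^{\epsilon}\rangle \sum_{P \supseteq Q,\, x \in P}|P|^{\alpha/n - 1}$. I would rewrite the first piece by completing the ancestor sum into $I_{\alpha}^{\D} f(x)$ via $\sum_{P \supseteq Q}|P|^{\alpha/n}\langle f\rangle_P = I_{\alpha}^{\D} f(x) - \sum_{P \subsetneq Q,\, x \in P}|P|^{\alpha/n}\langle f\rangle_P$ for $x \in Q$, producing $\Pi_b \circ I_{\alpha}^{\D}$ plus a descendant correction. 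I would rewrite the second piece by expanding $\langle \Pi_b^{*} f\rangle_P$ using the definition of $\Pi_b^{*}$ and comparing, thus identifying it with $I_{\alpha}^{\D} \circ \Pi_b^{*} f$ plus a parallel descendant correction. At this point the three target terms $\Pi_b \circ I_{\alpha}^{\D}$, $I_{\alpha}^{\D} \circ \Pi_b^{*}$, and $\Pi_b^{*} \circ I_{\alpha}^{\D}$ (the last coming from the $P = Q$ case, extracted via the product rule $h_Q^{\epsilon} h_Q^{\eta} = |Q|^{-1/2} h_Q^{\epsilon+\eta}$ applied to the Haar expansion of $f$) have been identified.

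It then remains to show that the two descendant corrections from the rewritings above, combined with the $\epsilon \neq \eta$ residual of the $P = Q$ case, assemble into the stated tail $\sum_{k \geq 1} 2^{-k\alpha/n} B_k(b, I_{\alpha}^{\D} f)$. I would achieve this by re-indexing the remaining double sum over nested pairs by the dyadic generation gap $k$, writing the outer cube as $Q^{(k)}$ of the inner cube, and checking that each residual takes exactly the shape $\langle b, h_Q^{\epsilon}\rangle \langle I_{\alpha}^{\D} f, h_{Q^{(k)}}^{\eta}\rangle h_{Q^{(k)}}^{\eta}(Q) h_Q^{\epsilon}$ that defines $B_k(b, I_{\alpha}^{\D} f)$, with the weight $2^{-k\alpha/n}$ traced back to the volume ratio attached after re-indexing.

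The main obstacle is this final bookkeeping: verifying that the descendant corrections from the two rewritings of the principal pieces, together with the $\epsilon \neq \eta$ residual of the $P = Q$ diagonal, assemble cleanly into the $B_k$-family with the stated weights and no leftover. The check is entirely algebraic at the level of Haar coefficients and uses only the product rule, the reconstruction formula $(2.2)$, and the explicit Haar action $I_{\alpha}^{\D} h_R^{\eta} = \sum_{P \subsetneq R}|P|^{\alpha/n} h_R^{\eta}(P)\mathbbm{1}_P$; no analytic estimates enter at this stage.
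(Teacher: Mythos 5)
Your structural split of $[h_Q^{\epsilon}, I_{\alpha}^{\D}]f$ by the nesting relation between $P$ and $Q$ is correct: the $P \cap Q = \emptyset$ and $P \subsetneq Q$ cases vanish, and only $P \supseteq Q$ survives. However, there is a concrete error and then a genuine gap.

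The error is in your rewriting of the first piece. Substituting
$\sum_{P \supseteq Q}|P|^{\alpha/n}\langle f\rangle_P = I_{\alpha}^{\D}f(x) - \sum_{P\subsetneq Q,\,x\in P}|P|^{\alpha/n}\langle f\rangle_P$
into $\sum_{Q,\epsilon}\langle b,h_Q^\epsilon\rangle h_Q^\epsilon(x)\sum_{P\supseteq Q}|P|^{\alpha/n}\langle f\rangle_P$ and summing over $Q,\epsilon$ recovers $\big(\sum_{Q,\epsilon}\langle b,h_Q^\epsilon\rangle h_Q^\epsilon(x)\big)I_\alpha^{\D}f(x) = b(x)\,I_\alpha^{\D}f(x)$, the \emph{pointwise product}, not $\Pi_b\circ I_\alpha^{\D}f$. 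The operator $\Pi_b\circ I_\alpha^{\D}$ has the cube average $\langle I_\alpha^{\D}f\rangle_Q$ in place of the pointwise value $I_\alpha^{\D}f(x)$, so your claimed ``main term'' is a different object, and the step simply undoes the commutator decomposition. The correct completion is
$\sum_{P\supseteq Q}|P|^{\alpha/n}\langle f\rangle_P = \langle I_\alpha^{\D}f\rangle_Q - \frac{1}{|Q|}\sum_{P\subsetneq Q}|P|^{\alpha/n+1}\langle f\rangle_P$,
which does produce $\Pi_b\circ I_\alpha^{\D}f$ with a genuinely different correction term. (Your treatment of the second piece via $I_\alpha^{\D}\circ\Pi_b^*$ is sound and, once the geometric sum $\frac{1}{|Q|}\sum_{P\subsetneq Q}|P|^{\alpha/n}\mathbbm{1}_P = c_\alpha|Q|^{\alpha/n-1}\mathbbm{1}_Q$ is used, it yields $\Pi_b^*\circ I_\alpha^{\D}f - I_\alpha^{\D}\circ\Pi_b^*f$ cleanly.)

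The gap is the final assembly into the $B_k$ family, which you acknowledge but do not carry out, and which is the actual content of the lemma. The paper's proof sidesteps this entirely by also expanding $f$ into Haar coefficients at the outset and using the diagonalization identity $I_\alpha^{\D}(h_I^\epsilon) = c_\alpha|I|^{\alpha/n}h_I^\epsilon$ (equation $(3.2)$). With both expansions in play, the single-Haar commutator $[h_J^\eta, I_\alpha^{\D}]h_I^\epsilon$ vanishes except when $I=J,\ \epsilon=\eta$ (which gives the $\Pi_b^*\circ I_\alpha^{\D}$ and $I_\alpha^{\D}\circ\Pi_b^*$ terms) or $J\subsetneq I$; in the latter case the factor $|I|^{\alpha/n}-|J|^{\alpha/n}$ splits, the $|I|^{\alpha/n}$ part assembles into $\Pi_b\circ I_\alpha^{\D}$ via the reconstruction formula $(2.2)$, and the $|J|^{\alpha/n}$ part, after re-indexing by the generation gap $k$ with $I = J^{(k)}$, is immediately the $B_k$ family because $|J|^{\alpha/n}$ is a fixed multiple of $|J^{(k)}|^{\alpha/n}$ and $c_\alpha|J^{(k)}|^{\alpha/n}\langle f,h_{J^{(k)}}^\epsilon\rangle = \langle I_\alpha^{\D}f, h_{J^{(k)}}^\epsilon\rangle$. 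Your approach, by keeping $f$ unexpanded, postpones precisely this step to the end, where it is no easier; to finish you would in effect have to expand $\langle f\rangle_P$ in Haar coefficients and run the same re-indexing. So what looks like a different route collapses back to the paper's computation, minus the key identity that makes it transparent.

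Finally, a small consistency check you would have hit had you pushed the bookkeeping through: the re-indexed weight attached to $B_k$ should be $|J|^{\alpha/n}/|J^{(k)}|^{\alpha/n} = 2^{-k\alpha}$ in dimension $n$, so the exponent $2^{-k\alpha/n}$ appearing in the lemma statement (which you inherited) looks like a typographical slip; tracking it carefully is exactly the kind of issue your final ``bookkeeping'' step needs to resolve rather than defer.
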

\begin{proof}
Using the Haar expansion $(\ref{decomposition})$, we can write
$$
[b,I_\alpha^{\D}] (f)
=\sum_{I,J \in \D} \sum_{\epsilon,\eta \neq 1} \langle b, h_{J}^{\eta} \rangle  \langle f, h_{I}^{\epsilon} \rangle [h_{J}^{\eta},I_\alpha^{\D}] h_{I}^{\epsilon}.
$$
Note that $\epsilon \neq 1$, it holds that
\begin{equation}\label{I-h}
I_{\alpha}^{\D}(h_{I}^{\epsilon})
=\sum_{Q \in \D : Q \subsetneq I} |Q|^{\frac{\alpha}{n}} \langle h_{I}^{\epsilon} \rangle_Q \mathbbm{1}_Q
=\Big( \sum_{Q \in \D : Q \subsetneq I } |Q|^{\frac{\alpha}{n}} \mathbbm{1}_Q \Big) h_{I}^{\epsilon}
=c_\alpha |I|^{\frac{\alpha}{n}} h_{I}^{\epsilon}.
\end{equation}
Thus, it yields that
\begin{align*}
[h_{J}^{\eta},I_\alpha^{\D}] h_{I}^{\epsilon}
=\begin{cases}
c_\alpha |I|^{\frac{\alpha}{n}} \frac{\mathbbm{1}_{I}}{|I|} - I_{\alpha}^{\D}\big(\frac{\mathbbm{1}_{I}}{|I|}\big),\ \ &\text{if}\ I=J \text{ and } \epsilon = \eta;\\
c_\alpha h_{I}^{\epsilon}(J) \big(|I|^{\frac{\alpha}{n}} - |J|^{\frac{\alpha}{n}}\big)  h_J^{\eta} ,\ \ &\text{if}\ J \subsetneq I;\\
0,\ \ & \text{otherwise}.
\end{cases}
\end{align*}
Moreover, by $(\ref{I-h})$, we have
\begin{equation}\label{I-f-h}
\langle I_{\alpha}^{\D} f, h_{I}^{\epsilon} \rangle
= \langle  f, I_{\alpha}^{\D} h_{I}^{\epsilon} \rangle
=c_{\alpha} |I|^{\frac{\alpha}{n}} \langle  f, h_{I}^{\epsilon} \rangle.
\end{equation}
Therefore,
$$
c_{\alpha} \sum_{I \in \D, \epsilon \neq 1} \langle b, h_I^{\epsilon} \rangle \langle f,h_I^{\epsilon} \rangle |I|^{\frac{\alpha}{n}} \frac{\mathbbm{1}_I}{|I|}
=\sum_{I \in \D, \epsilon \neq 1} \langle b, h_I^{\epsilon} \rangle \langle I_{\alpha}^{\D}f,h_I^{\epsilon} \rangle \frac{\mathbbm{1}_I}{|I|}
= \Pi_b^* \circ I_{\alpha}^{\D}(f).
$$
From the averaging formula $(\ref{average})$ and $(\ref{I-f-h})$, it follows that
\begin{equation}\label{I-f-J}
\langle I_{\alpha}^{\D} f \rangle_J
=\sum_{\substack{I: I \supsetneq J \\ \epsilon \neq 1}} \langle I_{\alpha}^{\D} f,  h_{I}^{\epsilon} \rangle  h_{I}^{\epsilon}(J)
=c_{\alpha} \sum_{\substack{I: I \supsetneq J \\ \epsilon \neq 1}} |I|^{\frac{\alpha}{n}} \langle  f, h_{I}^{\epsilon} \rangle h_{I}^{\epsilon}(J).
\end{equation}
Then, it holds that
$$
\sum_{I \in \D, \epsilon \neq 1} \langle b, h_I^{\epsilon} \rangle \langle f,h_I^{\epsilon} \rangle I_{\alpha}^{\D}\Big(\frac{\mathbbm{1}_I}{|I|}\Big)
= I_{\alpha}^{\D} \circ \Pi_b^* (f).
$$
The equation $(\ref{I-f-J})$ gives that
$$
c_{\alpha} \sum_{J \in \D,\eta \neq 1} \sum_{\substack{I: J \subsetneq I \\ \epsilon \neq 1}} \langle b, h_{J}^{\eta} \rangle  \langle f, h_{I}^{\epsilon} \rangle h_{I}^{\epsilon}(J)|I|^{\frac{\alpha}{n}} h_J^{\eta}
=\sum_{J \in \D,\eta \neq 1} \langle b, h_{J}^{\eta} \rangle \langle I_{\alpha}^{\D}f \rangle_J h_J^{\eta}
=\Pi_b \circ I_{\alpha}^{\D}(f).
$$
Now we consider the contribution of the last term. Using $(\ref{I-f-h})$, we get
\begin{align*}
&c_{\alpha} \sum_{\substack{I,J \in \D \\ J \subsetneq I}} \sum_{\epsilon,\eta \neq 1} \langle b, h_{J}^{\eta} \rangle  \langle f, h_{I}^{\epsilon} \rangle h_{I}^{\epsilon}(J) |J|^{\frac{\alpha}{n}} h_J^{\eta} \\
&=\sum_{k=1}^{\infty} 2^{-k\alpha/n} \sum_{I \in \D} \sum_{\epsilon,\eta \neq 1} \langle b, h_{J}^{\eta} \rangle \Big(c_{\alpha} \langle f, h_{J^{(k)}}^{\epsilon} \rangle |J^{(k)}|^{\frac{\alpha}{n}}\Big) h_{J^{(k)}}^{\epsilon}(J)  h_J^{\eta} \\
&=\sum_{k=1}^{\infty} 2^{-k\alpha/n} \sum_{I \in \D} \sum_{\epsilon,\eta \neq 1} \langle b, h_{J}^{\eta} \rangle \langle I_{\alpha}^{\D}f,h_{J^{(k)}}^{\epsilon} \rangle h_{J^{(k)}}^{\epsilon}(J)  h_J^{\eta} \\
&=\sum_{k=1}^{\infty} 2^{-k\alpha/n} B_k(b, \cdot) \circ I_{\alpha}^{\D}(f).
\end{align*}
This completes the proof of Lemma \ref{combination}.
\end{proof}
The two-weight inequalities for paraproducts are as follows.
\begin{lemma}\label{D-b-k}
Let $k \geq 0$, $ 1 < p < \infty $, and $\mu,\lambda \in A_p$. Let $\nu=\mu^{\frac1p} \lambda^{-\frac1p}$. Then
\begin{eqnarray}
\big\| T_b(f)  \big\|_{L^{p}(\lambda)} & \lesssim & ||b||_{BMO_{\D}^2(\nu)} ||f||_{L^p(\mu)}, \\
\big\| B_k(f,b)  \big\|_{L^{p}(\lambda)} & \lesssim & 2^{nk/2} ||b||_{BMO_{\D}^2(\nu)} ||f||_{L^p(\mu)},
\end{eqnarray}
where $T_b$ denotes either one of the operators $\Pi_b$, $\Pi_b^*$, $\Gamma_b$ and $B_k(b, \cdot)$. The implied constant is independent of $k$.
\end{lemma}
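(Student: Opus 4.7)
The unifying scheme is duality combined with Lemma~\ref{b-f-SD}. For any of the paraproducts $T_b \in \{\Pi_b, \Pi_b^*, \Gamma_b, B_k(b,\cdot)\}$, I would begin with the standard duality identity
\begin{equation*}
\|T_b f\|_{L^p(\lambda)} = \sup\bigl\{|\langle T_b f, g\rangle|:\ \|g\|_{L^{p'}(\sigma)} \le 1\bigr\},\qquad \sigma := \lambda^{1-p'}.
\end{equation*}
Expanding $T_b f$ in the Haar basis and moving $b$ outside the pairing rewrites it as $\langle T_b f, g\rangle = \langle b, \Phi[f,g]\rangle$ for an explicit bilinear form $\Phi$ built from the Haar coefficients of $f$ and $g$. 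For $\Pi_b$, for example,
\begin{equation*}
\Phi[f,g] = \sum_{Q \in \D,\ \epsilon \neq 1} \langle f\rangle_Q\, \langle g, h_Q^\epsilon\rangle\, h_Q^\epsilon,
\end{equation*}
and entirely analogous identities, obtained by inspection of the series defining each paraproduct, hold for $\Pi_b^*$, $\Gamma_b$ and $B_k(b,\cdot)$.

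Next I would invoke Lemma~\ref{b-f-SD}, with the weight $\nu$ playing the role of the $A_2$-weight there, to conclude
\begin{equation*}
|\langle b, \Phi[f,g]\rangle| \lesssim \|b\|_{BMO_{\D}^2(\nu)}\,\|S_{\D}\Phi[f,g]\|_{L^1(\nu)}.
\end{equation*}
The heart of the matter is a pointwise square-function bound of the shape
\begin{equation*}
S_{\D}\Phi[f,g](x) \lesssim C_k\, Mf(x)\, S_{\D}g(x),
\end{equation*}
with $C_k = 1$ for $T_b \in \{\Pi_b,\Pi_b^*,\Gamma_b\}$ and $C_k \lesssim 2^{nk/2}$ for $T_b = B_k(b,\cdot)$. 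For $\Pi_b$ this is immediate since $|\langle f\rangle_Q| \le Mf(x)$ whenever $x \in Q$, and summing in $(Q,\epsilon)$ recovers $S_{\D}g(x)^2$; for $\Pi_b^*$ and $\Gamma_b$ the same argument applies after a Cauchy--Schwarz step in the Haar-type index. Using the factorization $\nu = \mu^{1/p}\sigma^{1/p'}$, Hölder's inequality then gives
\begin{equation*}
\|Mf \cdot S_{\D}g\|_{L^1(\nu)} \le \|Mf\|_{L^p(\mu)}\,\|S_{\D}g\|_{L^{p'}(\sigma)},
\end{equation*}
and the two factors are controlled, respectively, by the $L^p(\mu)$-boundedness of $M$ (since $\mu \in A_p$) and by Lemma~\ref{SD-f} applied to $\sigma \in A_{p'}$. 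Taking the supremum over admissible $g$ closes the estimate.

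The main obstacle is the $B_k$ case. Here the pairing involves the constant value $h_{Q^{(k)}}^\eta(Q)$, whose magnitude $|Q^{(k)}|^{-1/2} = 2^{-nk/2}|Q|^{-1/2}$ must be balanced against the Cauchy--Schwarz loss in the $\eta$-sum and the crude control $|\langle f, h_{Q^{(k)}}^\eta\rangle| \le |Q^{(k)}|^{1/2}\, Mf(x)$ valid on $Q$. Tracking these factors carefully through the square-function computation is what produces the stated $2^{nk/2}$ constant; once this bookkeeping is done, the Hölder and weighted-boundedness steps proceed exactly as in the $\Pi_b$ case. A secondary technical point I would want to confirm is that Lemma~\ref{b-f-SD}, stated for a generic $A_2$-weight, applies here to $\nu$ in the Bloom sense and that the associated $BMO_{\D}^2(\nu)$ norm matches the one appearing in the statement; this is routine in the Bloom theory but deserves verification for clean constants.
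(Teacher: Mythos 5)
Your overall framework for the first inequality is exactly the paper's: dualize, rewrite $\langle T_b f, g\rangle = \langle b, \Phi\rangle$, apply Lemma~\ref{b-f-SD} with the Bloom weight $\nu$, bound $S_{\D}\Phi$ pointwise by $Mf\cdot S_{\D}g$, and finish with H\"older and the weighted bounds for $M$ and $S_{\D}$. That part is fine.

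However, you have misread where the factor $2^{nk/2}$ lives, and as a result you have skipped half of the lemma. The lemma contains \emph{two} inequalities. The first covers $\Pi_b,\Pi_b^*,\Gamma_b$ and $B_k(b,\cdot)$ with an implied constant \emph{independent of $k$}; the second covers the transposed operator $B_k(f,b)$, where $b$ occupies the \emph{second} slot, and that is the only place the factor $2^{nk/2}$ appears. Your proposal attributes $C_k\lesssim 2^{nk/2}$ to $B_k(b,\cdot)$, which contradicts the statement. In fact your own bookkeeping, pushed to completion, shows the opposite of what you claim: $|h_{Q^{(k)}}^\eta(Q)| = |Q^{(k)}|^{-1/2}$ and $|\langle f,h_{Q^{(k)}}^\eta\rangle| \le |Q^{(k)}|^{1/2} Mf(x)$ for $x\in Q$ combine to give $|\langle f,h_{Q^{(k)}}^\eta\rangle\, h_{Q^{(k)}}^\eta(Q)|\lesssim Mf(x)$ with no $k$ in sight, and summing over the $2^n-1$ choices of $\eta$ costs only a dimensional constant. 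This is exactly what the paper does for $B_k(b,\cdot)$, and it is why the first inequality is uniform in $k$.

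The genuinely missing piece is $B_k(f,b)$, and for this the pointwise bound $S_{\D}\Psi \lesssim Mf\cdot S_{\D}g$ simply does not hold. After dualizing, $\langle B_k(f,b),g\rangle = \langle b,\Psi\rangle$ with $\Psi = \sum_{I,\epsilon,\eta}\langle f,h_I^\epsilon\rangle\langle g,h_I^\epsilon\rangle h_{I^{(k)}}^\eta(I)\,h_{I^{(k)}}^\eta$: the Haar coefficients of $\Psi$ live on the coarse cubes $J=I^{(k)}$ and each such coefficient aggregates $\sim 2^{nk}$ fine cubes $I$, so no maximal-function control is available. The paper instead bounds $S_{\D}\Psi \le (S_k f)(S_k g)$ via Cauchy--Schwarz in $(I,\epsilon)$ and then in the scale, where $S_k f := \bigl(\sum_{J}\sum_{I:I^{(k)}=J,\epsilon}|\langle f,h_I^\epsilon\rangle|^2\,\mathbbm{1}_J/|J|\bigr)^{1/2}$ is a shifted square function, and separately proves $\|S_k f\|_{L^p(w)} \lesssim 2^{nk/2}\|f\|_{L^p(w)}$ for $w\in A_p$ by an $L^2(w)$ computation (using $\langle w^{-1}\rangle_I\langle w\rangle_J \le 2^{nk}[w]_{A_2}$) followed by extrapolation. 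This shifted square function and its weighted bound are the key ideas your proposal lacks, and without them the second inequality cannot be reached by your scheme.
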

\begin{proof}
(i) Let $ \big\| g \big\|_{L^{p'}(\lambda^{-p'/p})} =1$. For $k \geq 1$, we will consider,
$$
\langle B_k(b,f), g \rangle
= \Big\langle b, \sum_{I \in \D} \sum_{\epsilon,\eta \neq 1} \langle f,h_{I^{(k)}}^{\eta} \rangle h_{I^{(k)}}^{\eta}(I)
\langle g, h_I^{\epsilon} \rangle h_I^{\epsilon} \Big\rangle
:= \langle b,\Phi \rangle.
$$
Then we have
$$
S_{\D} \Phi = \bigg( \sum_{I \in \D,\epsilon \neq 1} \Big| \sum_{\eta \neq 1}\langle f,h_{I^{(k)}}^{\eta} \rangle h_{I^{(k)}}^{\eta}(I) \Big|^2
|\langle g, h_I^{\epsilon} \rangle|^2 \frac{\mathbbm{1}_I}{|I|} \bigg)^{1/2}
\lesssim Mf \cdot S_{\D}g.
$$
Thus, by the H\"{o}lder inequality and Lemma $\ref{SD-f}$, it follows that
$$
\big\| S_{\D} \Phi \big\|_{L^1(\nu)}
\lesssim \big\| Mf \big\|_{L^p(\mu)} \big\| S_{\D} g \big\|_{L^{p'}(\lambda^{-p'/p})}
\lesssim \big\| f \big\|_{L^p(\mu)} \big\| g \big\|_{L^{p'}(\lambda^{-p'/p})}.
$$
By Lemma $\ref{b-f-SD}$, we obtain that
$$
\big\| B_k(b,f)  \big\|_{L^{p}(\lambda)} \lesssim ||b||_{BMO_{\D}^2(\nu)} ||f||_{L^p(\mu)},\ k \geq 1.
$$
Using the similar arguments, we can show that the same result still holds for $\Pi_b$ and $\Gamma_b$. A simple duality argument gives the corresponding result for $\Pi_b^*$.
Finally, the two-weight inequality for $B_0(b,\cdot)$ follows from the fact $B_0(b,\cdot) = \Pi_b^* + \Gamma_b$.

(ii) We next show the two-weight boundedness of $B_k(\cdot,b)$. It suffices to show the cake $k \geq 1$, since $B_0(f,b)=B_0(b,f)$.
For any $ g \in L^{p'}(\lambda^{1-p'})$, we have
$$
\langle B_k(f,b), g \rangle
= \Big\langle b, \sum_{I \in \D} \sum_{\epsilon,\eta \neq 1} \langle f,h_{I}^{\epsilon} \rangle \langle g, h_I^{\epsilon} \rangle h_{I^{(k)}}^{\eta}(I) h_{I^{(k)}}^{\eta} \Big\rangle
:= \langle b,\Psi \rangle.
$$
By Lemma $\ref{b-f-SD}$, it is enough to prove
$$
\big\| S_{\D} \Psi \big\|_{L^1(\nu)}
\lesssim \big\| f \big\|_{L^p(\mu)} \big\| g \big\|_{L^{p'}(\lambda^{1-p'})}.
$$
To obtain this, we write
$$
S_{\D}\Psi = \bigg( \sum_{J \in \D,\eta \neq 1} \Big(\sum_{\substack{I:I^{(k)}=J \\ \epsilon \neq 1}} \langle f,h_{I}^{\epsilon} \rangle \langle g, h_I^{\epsilon} \rangle h_{I^{(k)}}^{\eta}(I) \Big)^2 \frac{\mathbbm{1}_J}{|J|} \bigg)^{1/2}.
$$
Then, the H\"{o}lder inequality implies that
\begin{align*}
S_{\D}\Psi
& \lesssim \sum_{J \in \D} \Big(\sum_{\substack{I:I^{(k)}=J \\ \epsilon \neq 1}} |\langle f,h_{I}^{\epsilon}\rangle| |\langle g, h_I^{\epsilon} \rangle| \Big) \frac{\mathbbm{1}_J}{|J|} \\
& \leq \sum_{J \in \D} \Big(\sum_{\substack{I:I^{(k)}=J \\ \epsilon \neq 1}} |\langle f,h_{I}^{\epsilon} \rangle|^2 \Big)^{1/2}
\Big(\sum_{\substack{I:I^{(k)}=J \\ \epsilon \neq 1}} |\langle g, h_I^{\epsilon} \rangle|^2 \Big)^{1/2} \frac{\mathbbm{1}_J}{|J|} \\
& \leq \bigg(\sum_{J \in \D} \sum_{\substack{I:I^{(k)}=J \\ \epsilon \neq 1}} |\langle f,h_{I}^{\epsilon} \rangle|^2 \frac{\mathbbm{1}_J}{|J|} \bigg)^{1/2}
\bigg(\sum_{J \in \D} \sum_{\substack{I:I^{(k)}=J \\ \epsilon \neq 1}} |\langle g, h_I^{\epsilon} \rangle|^2 \frac{\mathbbm{1}_J}{|J|} \bigg)^{1/2} \\
&:=(S_k f) (S_k g).
\end{align*}
Thus, we are reduced to demonstrating for $w \in A_p$
$$
\big\| S_k f \big\|_{L^p(w)} \lesssim 2^{nk/2}||f||_{L^p(w)}.
$$
In light of extrapolation theorem, it suffices to show it for the case $p=2$.
Indeed, there holds that
\begin{align*}
\big\| S_k f \big\|_{L^2(w)}^2
&=\sum_{J \in \D} \sum_{\substack{I:I^{(k)}=J \\ \epsilon \neq 1}} |\langle f, h_I^{\epsilon} \rangle|^2 \langle w^{-1} \rangle_I^{-1}
\Big(\langle w^{-1} \rangle_I \langle w \rangle_J \Big) \\
&\leq 2^{nk} [w]_{A_2} \sum_{I \in \D, \epsilon \neq 1} |\langle f, h_I^{\epsilon} \rangle|^2 \langle w^{-1} \rangle_I^{-1} \\
&\leq 2^{nk} [w]_{A_2} \sum_{I \in \D, \epsilon \neq 1} |\langle f, h_I^{\epsilon} \rangle|^2 \langle w \rangle_I \\
&\leq 2^{nk} [w]_{A_2} \big\| S_{\D} f \big\|_{L^2(w)}^2
\lesssim  2^{nk} \big\| f \big\|_{L^2(w)}^2 .
\end{align*}
This completes the proof.
\end{proof}

The case $k=1$ in Theorem $\ref{Theorem-Higher}$ follows from Lemma $\ref{combination}$, Lemma $\ref{D-b-k}$ and Lemma $\ref{M-alpha-I-alpha}$.

\section{Higher order commutator}
Denote $T_{\alpha}=C_b^1(I_{\alpha})$, and $F(z) = e^{bz} T_{\alpha} e^{-bz}$.
Then it is easy to see that
$$
C_b^{k+1}(I_{\alpha}) = C_b^k(T_{\alpha}) = F^{(k)}(0) = \frac{k!}{2 \pi i} \oint_{C} \frac{F(z)}{z^{k+1}} dz,
$$
where the integral is over any closed path around the origin.
If we set
$$
\Phi(z) := C_{n,p,q}([e^{bz} \mu]_{A_{p,q}}, [e^{bz} \lambda]_{A_{p,q}}),
$$
then, for any $r > 0$, we have
\begin{align*}
&\big\| C_b^{k+1}(I_{\alpha}) \big\|_{L^p(\mu^p) \rightarrow L^q(\lambda^q)} \\
&\leq \frac{k!}{2 \pi} \oint_{|z|=r} \big\| e^{bz} T_{\alpha} e^{-bz} \big\|_{L^p(\mu^p) \rightarrow L^q(\lambda^q)} \frac{|dz|}{|z|^{k+1}} \\
&\leq  \frac{k!}{2 \pi} \oint_{|z|=r} \big\| T_{\alpha} \big\|_{L^p((e^{bz} \mu)^p) \rightarrow L^q((e^{bz} \lambda)^q)} \frac{|dz|}{|z|^{k+1}} \\
&\leq k! r^{-k} ||b||_{BMO(\nu)} \sup_{|z| \leq r} \big| \Phi(z) \big|.
\end{align*}
We need the following relationship between the $A_{p,q}$ weights and the $BMO$ space.
\begin{lemma}\label{A(p,q)-BMO}
Let $1 < p,q < \infty$, $w \in A_{p,q}$ and $b \in BMO$. Then, there are constants $c_{n,p,q}, c_{n,p,q}' > 0$, depending only on the indicated parameters, such that
$$
[e^{Re(bz)} w]_{A_{p,q}} \leq c_{n,p,q}' [w]_{A_{p,q}}
$$
for all $z \in \C$ with
$$
|z| \leq \frac{c_{n,p,q}}{||b||_{BMO} (w^q)_{A_{q_0}}}, \ \ q_0=1+q/{p'}.
$$
\end{lemma}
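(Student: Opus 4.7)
The plan is to expand the $A_{p,q}$ constant of $e^{\Re(bz)} w$ cube by cube and reduce to a weighted John--Nirenberg inequality applied to $b - \langle b \rangle_Q$. Since $b$ is real-valued, $\Re(bz) = b\,\Re(z)$; splitting $b = (b - \langle b \rangle_Q) + \langle b \rangle_Q$ produces a factor $e^{\Re(z)\langle b \rangle_Q}$ in each integrand that cancels when the two averages are multiplied in the definition of $[e^{\Re(bz)} w]_{A_{p,q}}$. Thus it suffices to prove, uniformly in $Q$,
\begin{equation*}
\Bigl(\tfrac{1}{|Q|} \int_Q e^{q \Re(z)(b-\langle b \rangle_Q)} w^q\Bigr)^{1/q} \Bigl(\tfrac{1}{|Q|} \int_Q e^{-p' \Re(z)(b-\langle b \rangle_Q)} w^{-p'}\Bigr)^{1/p'} \lesssim [w]_{A_{p,q}}.
\end{equation*}

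I will use two ingredients. First, the standard correspondence between $A_{p,q}$ and Muckenhoupt classes: $w \in A_{p,q}$ forces $w^q \in A_{q_0}$ with $q_0 = 1 + q/p'$, and a dual computation shows $w^{-p'} \in A_{q_0'}$, so both are $A_\infty$ weights whose characteristics are controlled quantitatively by $(w^q)_{A_{q_0}}$. Second, the weighted John--Nirenberg inequality: for any $u \in A_\infty$ and $\phi \in BMO$,
\begin{equation*}
\frac{1}{u(Q)} \int_Q e^{\lambda|\phi - \langle \phi \rangle_Q|}\, u\, dx \leq 2 \qquad \text{whenever } \lambda \leq \frac{c_n}{[u]_{A_\infty}\, ||\phi||_{BMO}},
\end{equation*}
which follows from the classical (unweighted) John--Nirenberg estimate combined with the quantitative reverse H\"older inequality for $A_\infty$ weights.

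Applying this estimate first with $u = w^q$ and $\lambda = q|\Re(z)|$, and then with $u = w^{-p'}$ and $\lambda = p'|\Re(z)|$, both hypotheses hold as soon as $|z|$ is at most a constant $c_{n,p,q}/(||b||_{BMO}(w^q)_{A_{q_0}})$; the two resulting averages are then bounded by $2\langle w^q \rangle_Q$ and $2\langle w^{-p'} \rangle_Q$ respectively. Taking the $1/q$ and $1/p'$ powers and multiplying yields $\langle w^q \rangle_Q^{1/q}\langle w^{-p'} \rangle_Q^{1/p'}$ times an absolute constant, which is at most $2\,[w]_{A_{p,q}}$. The main technical obstacle is keeping the dependence of the threshold on $(w^q)_{A_{q_0}}$ completely explicit and monotone: this reduces to tracking the reverse H\"older exponent for $A_\infty$ weights and verifying that $[w^{-p'}]_{A_\infty}$ is quantitatively dominated by $(w^q)_{A_{q_0}}$ via the $A_{p,q}$ duality, so that the ``smallness'' constant $c_{n,p,q}$ in the hypothesis on $|z|$ depends only on $n,p,q$ and not on the weight.
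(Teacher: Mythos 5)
Your argument is correct, and it is a genuine expansion of what the paper only sketches. The paper proves this lemma by one line: it observes that $w\in A_{p,q}$ iff $w^q\in A_{q_0}$ with $q_0=1+q/p'$ (and the characteristics are comparable — strictly, $[w^q]_{A_{q_0}}=[w]_{A_{p,q}}^q$, not an equality as the paper states), and then cites Hyt\"onen's Lemma~2.1 in \cite{H-higher}, which is the exact $A_p$ analogue of this statement. Your route instead reproves that $A_p$ lemma directly in the $A_{p,q}$ setting: expand $[e^{\Re(bz)}w]_{A_{p,q}}$ over cubes, cancel the $e^{\pm\Re(z)\langle b\rangle_Q}$ factors between the two halves, and apply a weighted John--Nirenberg estimate separately to the $w^q$-average (with $u=w^q$) and the $w^{-p'}$-average (with $u=w^{-p'}$). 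The substance is the same — quantitative $A_\infty$ reverse H\"older feeding into John--Nirenberg — but your version is self-contained where the paper relies on an external citation, which buys clarity at the cost of length. One small simplification you could make: your final concern about ``verifying that $[w^{-p'}]_{A_\infty}$ is quantitatively dominated by $(w^q)_{A_{q_0}}$'' is moot, because the paper defines $(u)_{A_p}:=\max\{[u]_{A_\infty},[u^{1-p'}]_{A_\infty}\}$, so $(w^q)_{A_{q_0}}=\max\{[w^q]_{A_\infty},[w^{-p'}]_{A_\infty}\}$ controls both needed $A_\infty$ constants by definition; no duality computation is required.
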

The notation $(w)_{A_p}$ means that $(w)_{A_p} := \max \big\{[w]_{A_{\infty}},\ [w^{1-p'}]_{A_{\infty}}\big\}$.

Note that $w \in A_{p,q}$ if and only if $w^q \in A_{q_0}$, and $[w]_{A_{p,q}} = [w^q]_{A_{q_0}}$. Thus, the above lemma follows from this basic fact and Lemma $2.1$ \cite{H-higher}.

We continue our proof. If we take
$$
r=\frac{c_{n,p,q}}{||b||_{BMO} \max \big\{ (\mu^q)_{A_{q_0}}, (\lambda^q)_{A_{q_0}} \big\}},
$$
then Lemma $\ref{A(p,q)-BMO}$ implies that
$$
\sup_{|z| \leq r} \big| \Phi(z) \big|
\leq C_{n,p,q}(c_{n,p,q}' [\mu]_{A_{p,q}}, c_{n,p,q}' [\lambda]_{A_{p,q}})
:=C_{n,p,q}'([\mu]_{A_{p,q}}, [\lambda]_{A_{p,q}}).
$$
Collecting the above estimates, we deduce that
$$
\big\| C_b^{k+1}(I_{\alpha}) \big\|_{L^p(\mu^p) \rightarrow L^q(\lambda^q)}
\leq ||b||_{BMO(\nu)}  ||b||_{BMO}^{k} C_{n,p,q,k}([\mu]_{A_{p,q}}, [\lambda]_{A_{p,q}}) .
$$
\qed
\section{Bilinear Paraproduct Operators}
In this section, we will treat the boundedness of four bilinear paraproduct operators, which are defined by
\begin{eqnarray*}
\Lambda_b(f_1,f_2)
&:=&\sum_{\substack{P,Q_1,Q_2 \in \D \\ P \subsetneq Q_1}} \sum_{\eta,\epsilon_1,\epsilon_2 \neq 1} \langle b,h_P^{\eta} \rangle \langle f_1, h_{Q_1}^{\epsilon_1} \rangle
\langle f_2,h_{Q_2}^{\epsilon_2} \rangle |Q_1 \cap Q_2|^{\frac{\alpha}{n}} h_{Q_1}^{\epsilon_1}(P) h_P^{\eta} h_{Q_2}^{\epsilon_2};\\
\Delta_b(f_1,f_2)
&:=&\sum_{\substack{P,Q_1,Q_2 \in \D \\ P \subsetneq Q_1}} \sum_{\eta,\epsilon_1,\epsilon_2 \neq 1} \langle b,h_P^{\eta} \rangle \langle f_1, h_{Q_1}^{\epsilon_1} \rangle
\langle f_2,h_{Q_2}^{\epsilon_2} \rangle |P \cap Q_2|^{\frac{\alpha}{n}} h_P^{\eta} h_{Q_1}^{\epsilon_1}  h_{Q_2}^{\epsilon_2};\\
\Xi_b(f_1,f_2)
&:=&\sum_{\substack{Q_1,Q_2 \in \D \\ Q_1 \subsetneq Q_2}} \sum_{\epsilon_1,\epsilon_2 \neq 1} \langle b,h_{Q_1}^{\epsilon_1} \rangle \langle f_1, h_{Q_1}^{\epsilon_1} \rangle
\langle f_2,h_{Q_2}^{\epsilon_2} \rangle |Q_1|^{\frac{\alpha}{n}} h_{Q_2}^{\epsilon_2}(Q_1) \frac{\mathbbm{1}_{Q_1}}{|Q_1|};\\
\Theta_b(f_1,f_2)
&:=&\sum_{Q_1,Q_2 \in \D} \sum_{\epsilon_1,\epsilon_2 \neq 1} \langle b,h_{Q_1}^{\epsilon_1} \rangle \langle f_1, h_{Q_1}^{\epsilon_1} \rangle
\langle f_2,h_{Q_2}^{\epsilon_2} \rangle h_{Q_2}^{\epsilon_2}(Q)\Big(\sum_{Q:Q_1 \subsetneq Q \subsetneq Q_2} |Q|^{\frac{\alpha}{n}} \frac{\mathbbm{1}_Q}{|Q|}\Big).
\end{eqnarray*}
The two-weight results for the above bilinear paraproducts are as follows.
\begin{proposition}\label{Pro-Pra}
Let $b \in BMO^2_{\D}$ and $\D$ be a fixed dyadic grid on $\Rn$. Then
$$
\big\| T_b(f_1,f_2) \big\|_{L^q(\Rn)}
\lesssim ||b||_{BMO^2_{\D}} \big\| f_1 \big\|_{L^{p_1}(\Rn)} \big\| f_2 \big\|_{L^{p_2}(\Rn)},
$$
where $T_b$ denotes either one of the operators $\Lambda_b$, $\Delta_b$, $\Xi_b$ and $\Theta_b$.
\end{proposition}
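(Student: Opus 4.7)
My plan is to handle all four operators by one common template and then indicate the extra case work needed for $\Lambda_{b}$ and $\Delta_{b}$. For each operator $T_{b}$ I fix a test function $g\in L^{q'}(\Rn)$ with $\|g\|_{L^{q'}}=1$ and rewrite the pairing $\langle T_{b}(f_{1},f_{2}),g\rangle$ in the form $\langle b,\Phi\rangle$ for an explicit $\Phi=\Phi(f_{1},f_{2},g)$, obtained by collapsing the Haar expansions of $g$ and of the largest free cube via the averaging identity $(\ref{average})$. Lemma $\ref{b-f-SD}$, applied with the trivial weight $w\equiv 1\in A_{2}$, then gives
\[
|\langle b,\Phi\rangle|\;\lesssim\;\|b\|_{BMO_{\D}^{2}}\,\|S_{\D}\Phi\|_{L^{1}(\Rn)},
\]
and the remaining task is a pointwise estimate of the shape $S_{\D}\Phi\lesssim S_{\D}f_{i}\cdot\mathcal{U}_{\alpha}(f_{j},g)$, where $\{i,j\}=\{1,2\}$ and $\mathcal{U}_{\alpha}$ is either the bilinear fractional maximal $\mathcal{M}_{\alpha}$ or the bilinear dyadic fractional integral $\I_{\alpha}^{\D}$. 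Hölder with exponents $p_{i}$ and $s=p_{i}'$ closes the estimate, since the scaling identity $\tfrac{1}{q}=\tfrac{1}{p_{1}}+\tfrac{1}{p_{2}}-\tfrac{\alpha}{n}$ forces $\tfrac{1}{s}=\tfrac{1}{p_{j}}+\tfrac{1}{q'}-\tfrac{\alpha}{n}$, which is precisely the output exponent for $\mathcal{M}_{\alpha}$ and $\I_{\alpha}^{\D}$ supplied by Theorem $\ref{M-alpha-I-alpha}$ (with $w\equiv 1$) and Lemma $\ref{SD-f}$ for $S_{\D}$.

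For $\Xi_{b}$, which is the cleanest, the $Q_{2}$-Haar series collapses via $(\ref{average})$ (using that $h_{Q_{2}}^{\epsilon_{2}}$ is constant on $Q_{1}\subsetneq Q_{2}$) into $\langle f_{2}\rangle_{Q_{1}}$, and pairing $\mathbbm{1}_{Q_{1}}/|Q_{1}|$ against $g$ gives $\langle g\rangle_{Q_{1}}$. Pulling $\sup_{Q_{1}\ni x}|Q_{1}|^{\alpha/n}\langle f_{2}\rangle_{Q_{1}}\langle g\rangle_{Q_{1}}=\mathcal{M}_{\alpha}(f_{2},g)(x)$ out of the $\ell^{2}$-sum yields $S_{\D}\Phi(x)\leq\mathcal{M}_{\alpha}(f_{2},g)(x)\,S_{\D}f_{1}(x)$. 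For $\Theta_{b}$ the same $(\ref{average})$-collapse on $Q_{2}$ reduces the pairing to a sum over $Q_{1}$ of $\langle b,h_{Q_{1}}^{\epsilon_{1}}\rangle\langle f_{1},h_{Q_{1}}^{\epsilon_{1}}\rangle\,\beta_{Q_{1}}$ with
\[
\beta_{Q_{1}}\;=\!\!\sum_{Q\supsetneq Q_{1}}|Q|^{\alpha/n}\langle f_{2}\rangle_{Q}\langle g\rangle_{Q}\;\leq\;\I_{\alpha}^{\D}(f_{2},g)(x),\qquad x\in Q_{1},
\]
so the same argument goes through with $\I_{\alpha}^{\D}$ in place of $\mathcal{M}_{\alpha}$.

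The main obstacle is $\Lambda_{b}$ and $\Delta_{b}$, where the three free scales $P,Q_{1},Q_{2}$ together with the intersection factors $|Q_{1}\cap Q_{2}|^{\alpha/n}$ (resp.\ $|P\cap Q_{2}|^{\alpha/n}$) require case work on the relative position of $Q_{2}$ with respect to $Q_{1}$ (resp.\ $P$). Specifically, I would split into the three geometric cases $Q_{2}\subsetneq Q_{1}$, $Q_{2}\supseteq Q_{1}$, $Q_{2}\cap Q_{1}=\emptyset$: the disjoint case contributes zero, and in each nested case the intersection collapses to the volume of the smaller cube while the Haar product $h_{P}^{\eta}h_{Q_{2}}^{\epsilon_{2}}$ is resolved either by constancy of the outer Haar on the inner cube (strict nesting) or by the product identity $h_{Q}^{\epsilon}h_{Q}^{\eta}=|Q|^{-1/2}h_{Q}^{\epsilon+\eta}$ (coincident cubes), after which a further application of $(\ref{average})$ removes one of the remaining Haar coefficients. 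Each resulting piece then has exactly the structure of $\Xi_{b}$ or $\Theta_{b}$ with the roles of $f_{1},f_{2},g$ possibly permuted, so the template of the first paragraph applies and produces the claimed inequality $\|T_{b}(f_{1},f_{2})\|_{L^{q}}\lesssim\|b\|_{BMO_{\D}^{2}}\|f_{1}\|_{L^{p_{1}}}\|f_{2}\|_{L^{p_{2}}}$.
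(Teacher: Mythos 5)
Your overall scheme is the same as the paper's: duality against $g\in L^{q'}$, rewriting $\langle T_b(f_1,f_2),g\rangle=\langle b,\Phi\rangle$, invoking Lemma~\ref{b-f-SD} with the trivial weight to reduce to $\|S_\D\Phi\|_{L^1}$, and then a pointwise square-function estimate closed by H\"older and Theorem~\ref{M-alpha-I-alpha}. Your treatments of $\Xi_b$ and $\Theta_b$ coincide with the paper's, and your proposed case split for $\Lambda_b$ and $\Delta_b$ according to the nesting of $Q_2$ against $Q_1$ (resp.\ $P$) is essentially the paper's decomposition. Two of the resulting pieces do indeed match your template ($S_\D g\cdot\I_\alpha(|f_1|,|f_2|)$ for $P\subsetneq Q_1\subsetneq Q_2$ and $Q_1=Q_2$; $S_\D f_2\cdot\M_\alpha(f_1,g)$ for $P=Q_2$, and $S_\D g\cdot\M_\alpha(f_1,f_2)$ for the piece $P\subsetneq Q_2$ in $\Delta_b$).

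However, your reduction claim is too optimistic for the subcase $Q_2\subsetneq P\subsetneq Q_1$ (the paper's $\Lambda_b^{3,2}$, which also reappears as $\Delta_{b,3}$). After collapsing the $Q_1$-series to $\langle f_1\rangle_P$, the test function $\psi$ has, for fixed $P$, a Haar coefficient
$\sum_{Q_2\subsetneq P}\langle f_2,h_{Q_2}^{\epsilon_2}\rangle\langle g,h_{Q_2}^{\epsilon_2}\rangle\langle f_1\rangle_P|Q_2|^{\alpha/n}h_P^\eta(Q_2)$,
so both $f_2$ and $g$ appear through Haar coefficients on the \emph{same} cube $Q_2$ and neither can be pulled out as a square function in the way your template ``$S_\D f_i\cdot\mathcal U_\alpha(f_j,g)$'' requires. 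The paper resolves this by reordering the sums so the outer sum is over $Q_2$, bounding $|\langle f_2,h_{Q_2}^{\epsilon_2}\rangle\langle g,h_{Q_2}^{\epsilon_2}\rangle|\le|Q_2|\langle|f_2|\rangle_{Q_2}\langle|g|\rangle_{Q_2}$, and summing the geometric series $\sum_{P\supsetneq Q_2}|P|^{-2}\sim|Q_2|^{-2}$ to arrive at the pointwise bound $S_\D\psi\lesssim M(f_1)\cdot\I_\alpha(|f_2|,|g|)$, with $M$ the Hardy--Littlewood maximal operator rather than $S_\D$. Your proposal invokes only Lemma~\ref{SD-f} for the square-function factor and does not anticipate needing $M$; this is a genuine (if local) gap that your template as stated does not cover, and you would need to add this alternative pointwise estimate to complete the argument.
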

The remainder of this section is devoted to the proof of Proposition $\ref{Pro-Pra}$.
\subsection{Estimate of $\Lambda_b$}
To analyze $\Lambda_b$. We perform the decomposition
\begin{align*}
\Lambda_b(f_1,f_2)
&=\sum_{\substack{P,Q_1,Q_2 \in \D \\ P \subsetneq Q_1 \subsetneq Q_2}} \sum_{\eta,\epsilon_1,\epsilon_2 \neq 1}
+\sum_{\substack{P,Q_1,Q_2 \in \D \\ P \subsetneq Q_1 = Q_2}} \sum_{\eta,\epsilon_1,\epsilon_2 \neq 1}
+\sum_{\substack{P,Q_1,Q_2 \in \D \\ P \subsetneq Q_1, Q_2 \subsetneq Q_1}} \sum_{\eta,\epsilon_1,\epsilon_2 \neq 1} \\
&:= \Lambda_b^{1}(f_1,f_2) + \Lambda_b^{2}(f_1,f_2) + \Lambda_b^{3}(f_1,f_2).
\end{align*}
We will dominate the three parts consecutively. Observe that
\begin{align*}
\Lambda_b^{1}(f_1,f_2)
&=\sum_{\substack{P,Q_1 \in \D \\ P \subsetneq Q_1}} \sum_{\eta,\epsilon_1 \neq 1} \langle b,h_P^{\eta} \rangle \langle f_1, h_{Q_1}^{\epsilon_1} \rangle |Q_1|^{\frac{\alpha}{n}}  h_{Q_1}^{\epsilon_1}(P) h_{P}^{\eta} \Big(\sum_{\substack{Q_2 \in \D:
Q_2 \supsetneq Q_1 \\ \epsilon_2 \neq 1}} \langle f_2, h_{Q_2}^{\epsilon_2} \rangle h_{Q_2}^{\epsilon_2}(Q_1)\Big)\\
&=\sum_{\substack{P,Q_1 \in \D \\ P \subsetneq Q_1}} \sum_{\eta,\epsilon_1 \neq 1} \langle b,h_P^{\eta} \rangle \langle f_1, h_{Q_1}^{\epsilon_1} \rangle  \langle f_2 \rangle_{Q_1} |Q_1|^{\frac{\alpha}{n}}  h_{Q_1}^{\epsilon_1}(P) h_{P}^{\eta}.
\end{align*}
Then, it follows that
\begin{equation}\label{Lambda-1}
\big\langle \Lambda_b^{1}(f_1,f_2),g \big\rangle
=\langle b, \Phi \rangle,
\end{equation}
where
$$
\Phi=\sum_{\substack{P,Q_1 \in \D \\ P \subsetneq Q_1}} \sum_{\eta,\epsilon_1 \neq 1} \langle g,h_P^{\eta} \rangle \langle f_1, h_{Q_1}^{\epsilon_1} \rangle  \langle f_2 \rangle_{Q_1} |Q_1|^{\frac{\alpha}{n}} h_{Q_1}^{\epsilon_1}(P) h_{P}^{\eta}.
$$
Notice that
\begin{equation}\aligned\label{I-alpha-f1-f2}
(S_{\D}\Phi)^2
&=\sum_{P \in \D,\eta \neq 1}|\langle g,h_P^{\eta} \rangle |^2 \Big(\sum_{\substack{Q_1:Q_1 \subsetneq P \\ \epsilon_1 \neq 1}}
\langle f_1, h_{Q_1}^{\epsilon_1} \rangle \langle f_2 \rangle_{Q_1}|Q_1|^{\frac{\alpha}{n}} h_{Q_1}^{\epsilon_1}(P)\Big)^2 \frac{\mathbbm{1}_P}{|P|} \\
&\lesssim \sum_{P \in \D,\eta \neq 1}|\langle g,h_P^{\eta} \rangle |^2 \Big(\sum_{Q_1 \in \D} |Q_1|^{\frac{\alpha}{n}}
\langle |f_1| \rangle_{Q_1} \langle |f_2| \rangle_{Q_1}\Big)^2 \frac{\mathbbm{1}_P}{|P|} \\
&\leq (S_{\D} g)^2 \cdot \I_{\alpha}(|f_1|,|f_2|)^2.
\endaligned
\end{equation}
Consequently, combining Lemma $\ref{b-f-SD}$, $(\ref{Lambda-1})$, $(\ref{I-alpha-f1-f2})$ with $(\ref{I-alpha})$, we get
\begin{align*}
\big| \big\langle \Lambda_b^{1}(f_1,f_2),g \big\rangle \big|
&\lesssim ||b||_{BMO^2_{\D}} \big\| S_{\D} \Phi \big\|_{L^1(\Rn)} \\
&\lesssim ||b||_{BMO^2_{\D}} \big\|S_{\D} g \cdot \I_{\alpha}(|f_1|,|f_2|)\big\|_{L^1(\Rn)} \\
&\lesssim ||b||_{BMO^2_{\D}} \big\|S_{\D} g \big\|_{L^{q'}(\Rn)} \big\| \I_{\alpha}(|f_1|,|f_2|)\big\|_{L^q(\Rn)} \\
&\lesssim ||b||_{BMO^2_{\D}} \big\| g \big\|_{L^{q'}(\Rn)} \big\| f_1 \big\|_{L^{p_1}(\Rn)} \big\| f_2 \big\|_{L^{p_2}(\Rn)}.
\end{align*}
Then, Riesz representation theorem gives that
$$
\big\| \Lambda_b^1(f_1,f_2) \big\|_{L^q(\Rn)}
\lesssim ||b||_{BMO^2_{\D}} \big\| f_1 \big\|_{L^{p_1}(\Rn)} \big\| f_2 \big\|_{L^{p_2}(\Rn)}.
$$

Next we consider the contribution of $\Lambda_b^{2}$.
\begin{align*}
\Lambda_b^{2}(f_1,f_2)
&=\sum_{\substack{P,Q_1 \in \D \\ P \subsetneq Q_1}} \sum_{\eta,\epsilon_1, \epsilon_2 \neq 1} \langle b,h_P^{\eta} \rangle \langle f_1, h_{Q_1}^{\epsilon_1} \rangle
\langle f_2,h_{Q_1}^{\epsilon_2} \rangle |Q_1|^{\frac{\alpha}{n}} h_{Q_1}^{\epsilon_1}(P) h_{Q_1}^{\epsilon_2}(P) h_{P}^{\eta}
\end{align*}
and
\begin{align*}
\big\langle \Lambda_b^{2}(f_1,f_2),g \big\rangle
=\langle b, \Psi \rangle,
\end{align*}
where
$$
\Psi=\sum_{\substack{P,Q_1 \in \D \\ P \subsetneq Q_1}} \sum_{\eta,\epsilon_1,\epsilon_2 \neq 1} \langle b,h_P^{\eta} \rangle \langle f_1, h_{Q_1}^{\epsilon_1} \rangle
\langle f_2,h_{Q_1}^{\epsilon_2} \rangle |Q_1|^{\frac{\alpha}{n}} h_{Q_1}^{\epsilon_1}(P) h_{Q_1}^{\epsilon_2}(P) h_{P}^{\eta}.
$$
Then, it yields that
\begin{align*}
(S_{\D}\Psi)^2
&=\sum_{P \in \D,\eta \neq 1}|\langle g,h_P^{\eta} \rangle |^2 \Big(\sum_{\substack{Q_1:Q_1 \subsetneq P \\ \epsilon_1,\epsilon_2 \neq 1}}
\langle f_1, h_{Q_1}^{\epsilon_1} \rangle \langle f_2,h_{Q_1}^{\epsilon_2} \rangle |Q_1|^{\frac{\alpha}{n}} h_{Q_1}^{\epsilon_1}(P) h_{Q_1}^{\epsilon_2}(P) \Big)^2 \frac{\mathbbm{1}_P}{|P|} \\
&\lesssim \sum_{P \in \D,\eta \neq 1}|\langle g,h_P^{\eta} \rangle|^2 \Big(\sum_{Q_1 \in \D} |Q_1|^{\frac{\alpha}{n}}
\langle |f_1| \rangle_{Q_1} \langle |f_2| \rangle_{Q_1}\Big)^2 \frac{\mathbbm{1}_P}{|P|} \\
&\leq (S_{\D} g)^2 \cdot \I_{\alpha}(|f_1|,|f_2|)^2.
\end{align*}
Similarly argument as we deal with $\Lambda^1_b$, one may obtain
$$
\big\| \Lambda_b^2(f_1,f_2) \big\|_{L^q(\Rn)}
\lesssim ||b||_{BMO^2_{\D}} \big\| f_1 \big\|_{L^{p_1}(\Rn)} \big\| f_2 \big\|_{L^{p_2}(\Rn)}.
$$

As for the third part, it can be split in the following form
\begin{align*}
\Lambda_b^{3}(f_1,f_2)
&=\sum_{\substack{P,Q_1,Q_2 \in \D \\ Q_2 = P \subsetneq Q_1}} \sum_{\eta,\epsilon_1,\epsilon_2 \neq 1}
+\sum_{\substack{P,Q_1,Q_2 \in \D \\ Q_2 \subsetneq P \subsetneq Q_1}} \sum_{\eta,\epsilon_1,\epsilon_2 \neq 1}
+\sum_{\substack{P,Q_1,Q_2 \in \D \\ P \subsetneq Q_2 \subsetneq Q_1}} \sum_{\eta,\epsilon_1,\epsilon_2 \neq 1} \\
&:= \Lambda_b^{3,1}(f_1,f_2) + \Lambda_b^{3,2}(f_1,f_2) +  \Lambda_b^{3,3}(f_1,f_2).
\end{align*}
Note that
$$
\Lambda_b^{3,3}(f_1,f_2) = \Lambda_b^{1}(f_1,f_2).
$$
From the equality $(\ref{average})$, it follows that
\begin{align*}
\Lambda_b^{3,1}(f_1,f_2)
&=\sum_{P \in \D}\sum_{\eta,\epsilon_2 \neq 1} \langle b,h_P^\eta \rangle \langle f_2,h_P^{\epsilon_2} \rangle |P|^{\frac{\alpha}{n}} h_P^{\epsilon_2} h_P^{\eta}\sum_{\substack{Q_1 \in \D:Q_1 \supsetneq P \\ \epsilon_1 \neq 1}} \langle f_1,h_{Q_1}^{\epsilon_1} \rangle h_{Q_1}^{\epsilon_1}(P) \\
&=\sum_{P \in \D}\sum_{\eta,\epsilon_2 \neq 1} \langle b,h_P^\eta \rangle \langle f_2,h_P^{\epsilon_2} \rangle \langle f_1 \rangle_P |P|^{\frac{\alpha}{n}} h_P^{\epsilon_2} h_P^{\eta} \\
&=\sum_{P \in \D,\eta \neq 1} \langle b,h_P^\eta \rangle \langle f_2,h_P^{\eta} \rangle \langle f_1 \rangle_P |P|^{\frac{\alpha}{n}} \frac{\mathbbm{1}_P}{|P|} \\
&\quad + \sum_{P \in \D}\sum_{\substack{\eta,\epsilon_2 \neq 1 \\ \eta \neq \epsilon_2}} \langle b,h_P^\eta \rangle \langle f_2,h_P^{\epsilon_2} \rangle \langle f_1 \rangle_P |P|^{\frac{\alpha}{n}} |P|^{-\frac12} h_P^{\epsilon_2 + \eta} \\
&:=\Lambda_{b,=}^{3,1}(f_1,f_2) + \Lambda_{b,\neq}^{3,1}(f_1,f_2) .
\end{align*}
It immediately yields that
$$
\big\langle \Lambda_{b,=}^{3,1}(f_1,f_2),g \big\rangle
=\langle b,\Psi_{=} \rangle,\ \
\big\langle \Lambda_{b,\neq}^{3,1}(f_1,f_2),g \big\rangle
=\langle b,\Psi_{\neq} \rangle,
$$
where
\begin{eqnarray*}
\Psi_{=}
&:=&\sum_{P \in \D,\eta \neq 1} \langle f_2,h_{P}^\eta \rangle |P|^{\frac{\alpha}{n}} \langle f_1 \rangle_P \langle g \rangle_P h_P^{\eta}, \\
\Psi_{\neq}
&:=&\sum_{P \in \D}\sum_{\substack{\eta,\epsilon_2 \neq 1 \\ \eta \neq \epsilon_2}} \langle f_2,h_{P}^{\epsilon_2} \rangle
\langle g,h_P^{\eta + \epsilon_2} \rangle |P|^{\frac{\alpha}{n}} \langle f_1 \rangle_P h_P^{\eta}.
\end{eqnarray*}
Thus, we obtain the following pointwise estimates:
\begin{align*}
(S_{\D}\Psi_{=})^2
=\sum_{P \in \D,\eta \neq 1}\big| \langle f_2,h_{P}^\eta \rangle \big|^2 \big(|P|^{\frac{\alpha}{n}} \langle f_1 \rangle_P
\langle g \rangle_P \big)^2 \frac{\mathbbm{1}_P}{|P|}
\leq (S_{\D} f_2)^2 \cdot \M_{\alpha}(f_1,g)^2.
\end{align*}
Moreover,
\begin{align*}
(S_{\D}\Psi_{\neq})^2
&=\sum_{P \in \D,\eta \neq 1} \Big(\sum_{\epsilon_2 \neq 1, \epsilon_2 \neq \eta}  \langle f_2,h_{P}^{\epsilon_2} \rangle
\langle g,h_P^{\eta + \epsilon_2} \rangle |P|^{\frac{\alpha}{n}} \langle f_1 \rangle_P \Big)^2 \frac{\mathbbm{1}_P}{|P|} \\
&\lesssim \sum_{P \in \D} \Big(\sum_{\epsilon_2 \neq 1}  \langle f_2,h_{P}^{\epsilon_2} \rangle |P|^{\frac{\alpha}{n}} \langle |f_1| \rangle_P \langle |g| \rangle_P \Big)^2 \frac{\mathbbm{1}_P}{|P|} \\
&\lesssim \sum_{P \in \D,\epsilon_2 \neq 1} \big|\langle f_2,h_{P}^{\epsilon_2} \rangle\big|^2 \frac{\mathbbm{1}_P}{|P|} \cdot \M_{\alpha}(f_1,g)^2 \\
&\leq (S_{\D} f_2)^2 \cdot \M_{\alpha}(f_1,g)^2.
\end{align*}
Accordingly, by the H\"{o}lder inequality and $(\ref{M-alpha})$, it now follows that 
\begin{equation}\aligned\label{b-3-1}
\Big| \big\langle \Lambda_b^{3,1}(f_1,f_2),g \big\rangle \Big|
&\lesssim  ||b||_{BMO^2_{\D}} \Big(\big\| S_{\D} \Psi_{=} \big\|_{L^1(\Rn)} + \big\| S_{\D} \Psi_{\neq} \big\|_{L^1(\Rn)} \Big) \\
&\lesssim  ||b||_{BMO^2_{\D}} \big\|S_{\D} f_2 \cdot \M_{\alpha}(f_1,g)\big\|_{L^1(\Rn)} \\
&\lesssim  ||b||_{BMO^2_{\D}} \big\|S_{\D} f_2 \big\|_{L^{p_2}(\Rn)} \big\| \M_{\alpha}(f_1,g)\big\|_{L^{p_2'}(\Rn)} \\
&\lesssim  ||b||_{BMO^2_{\D}} \big\| f_2 \big\|_{L^{p_2}(\Rn)} \big\| f_1 \big\|_{L^{p_1}(\Rn)} \big\| g \big\|_{L^{q'}(\Rn)}.
\endaligned
\end{equation}
This gives that
$$
\big\| \Lambda_b^{3,1}(f_1,f_2) \big\|_{L^q(\Rn)}
\lesssim ||b||_{BMO^2_{\D}} \big\| f_1 \big\|_{L^{p_1}(\Rn)} \big\| f_2 \big\|_{L^{p_2}(\Rn)}.
$$

Finally, we consider the estimate of $\Lambda_b^{3,2}$. The average identity $(\ref{average})$ implies that
\begin{align*}
\Lambda_b^{3,2}(f_1,f_2)
&=\sum_{\substack{P,Q_2 \in \D \\ Q_2 \subsetneq P}} \sum_{\eta,\epsilon_2 \neq 1} \langle b,h_P^\eta \rangle \langle f_2, h_{Q_2}^{\epsilon_2} \rangle |Q_2|^{\frac{\alpha}{n}} h_{P}^{\eta}(Q_2) h_{Q_2}^{\epsilon_2}
\sum_{\substack{Q_1 \in \D:Q_1 \supsetneq P \\ \epsilon_1 \neq 1}} \langle f_1, h_{Q_1}^{\epsilon_1} \rangle h_{Q_1}^{\epsilon_1}(P) \\
&=\sum_{\substack{P,Q_2 \in \D \\ Q_2 \subsetneq P}} \sum_{\eta,\epsilon_2 \neq 1} \langle b,h_P^\eta \rangle \langle f_2, h_{Q_2}^{\epsilon_2} \rangle \langle f_1 \rangle_P |Q_2|^{\frac{\alpha}{n}} h_{P}^{\eta}(Q_2) h_{Q_2}^{\epsilon_2}.
\end{align*}
Furthermore, we get
$$
\big\langle \Lambda_{b}^{3,2}(f_1,f_2),g \big\rangle
=\langle b,\psi \rangle
$$
where
$$
\psi=\sum_{\substack{P,Q_2 \in \D \\ Q_2 \subsetneq P}} \sum_{\eta,\epsilon_2 \neq 1} \langle f_2, h_{Q_2}^{\epsilon_2} \rangle \langle g, h_{Q_2}^{\epsilon_2} \rangle \langle f_1 \rangle_P  |Q_2|^{\frac{\alpha}{n}} h_P^{\eta}(Q_2) h_P^{\eta}.
$$
Thus, we obtain
\begin{equation}\aligned\label{SD-psi}
S_{\D}\psi
&=\bigg(\sum_{P \in \D,\eta \neq 1} \Big(\sum_{\substack{Q_2:Q_2 \subsetneq P \\ \epsilon_2 \neq 1}} \langle f_2, h_{Q_2}^{\epsilon_2} \rangle \langle g,h_{Q_2}^{\epsilon_2} \rangle \langle f_1 \rangle_P  |Q_2|^{\frac{\alpha}{n}} h_P^{\eta}(Q_2) \Big)^2 \frac{\mathbbm{1}_P}{|P|} \bigg)^{\frac12} \\
&\leq \sum_{Q_2 \in \D,\epsilon_2 \neq 1} |Q_2|^{\frac{\alpha}{n}} \big|\langle f_2, h_{Q_2}^{\epsilon_2} \rangle \langle g, h_{Q_2}^{\epsilon_2} \rangle
\big| \Big(\sum_{\substack{P \in \D: P \supsetneq Q_2 \\ \eta \neq 1}} \big|\langle f_1 \rangle_P h_P^{\eta}(Q_2)\big|^2 \frac{\mathbbm{1}_P}{|P|} \Big)^{\frac12} \\
&\lesssim M(f_1)\sum_{Q_2 \in \D} |Q_2|^{\frac{\alpha}{n}} \langle |f_2| \rangle_{Q_2} \langle g \rangle_{Q_2} |Q_2|
\Big(\sum_{P:P \supsetneq Q_2} \frac{1}{|P|^2}\Big)^{\frac12} \\
&\lesssim M(f_1) \cdot \I_{\alpha}(|f_2|,|g|).
\endaligned
\end{equation}
Proceeding as we did in $(\ref{b-3-1})$, it follows that
$$
\big\| \Lambda_b^{3,2}(f_1,f_2) \big\|_{L^q(\Rn)}
\lesssim ||b||_{BMO^2_{\D}} \big\| f_1 \big\|_{L^{p_1}(\Rn)} \big\| f_2 \big\|_{L^{p_2}(\Rn)}.
$$
\qed
\subsection{Estimate of $\Delta_b$}
In this subsection, we will deal with $\Delta_b$. Thanks to $(\ref{average})$, we have
\begin{align*}
\Delta_b(f_1,f_2)
&=\sum_{\substack{P,Q_2 \in \D \\ P \cap Q_2 \neq \emptyset}} \sum_{\eta,\epsilon_2 \neq 1} \langle b, h_P^{\eta} \rangle \langle f_2, h_{Q_2}^{\epsilon_2} \rangle
|P \cap Q_2|^{\frac{\alpha}{n}} h_P^{\eta} h_{Q_2}^{\epsilon_2} \sum_{\substack{Q_1 \in \D: Q_1 \supsetneq P \\ \epsilon_1 \neq 1}} \langle f_1, h_{Q_1}^{\epsilon_1} \rangle  h_{Q_1}^{\epsilon_1}(P) \\
&=\sum_{\substack{P,Q_2 \in \D \\ P \cap Q_2 \neq \emptyset}} \sum_{\eta,\epsilon_2 \neq 1} \langle b, h_P^{\eta} \rangle \langle f_2, h_{Q_2}^{\epsilon_2} \rangle \langle f_1 \rangle_{P} |P \cap Q_2|^{\frac{\alpha}{n}} h_P^{\eta} h_{Q_2}^{\epsilon_2} \\
&=\sum_{\substack{P,Q_2 \in \D \\ Q_2 = P}}\sum_{\eta,\epsilon_2 \neq 1}
+ \sum_{\substack{P,Q_2 \in \D \\ P \subsetneq Q_2}} \sum_{\eta,\epsilon_2 \neq 1}
+ \sum_{\substack{P,Q_2 \in \D \\ Q_2 \subsetneq P}} \sum_{\eta,\epsilon_2 \neq 1} \\
&:=\Delta_{b,1}(f_1,f_2) + \Delta_{b,2}(f_1,f_2) + \Delta_{b,3}(f_1,f_2).
\end{align*}
It is easy to see that
$$
\Delta_{b,3}(f_1,f_2) = \Lambda_b^{3,2}(f_1,f_2),
$$
and
$$
\Delta_{b,1}(f_1,f_2)
=\sum_{P \in \D} \sum_{\substack{\eta,\epsilon_2 \neq 1 \\ \eta=\epsilon_2}}
+\sum_{P \in \D} \sum_{\substack{\eta,\epsilon_2 \neq 1 \\ \eta \neq \epsilon_2}}
=\Lambda_{b,=}^{3,1}(f_1,f_2) + \Lambda_{b,\neq}^{3,1}(f_1,f_2).
$$
Thus, it suffices to bound $\Delta_{b,2}$. Indeed, there holds that
\begin{align*}
\Delta_{b,2}(f_1,f_2)
&=\sum_{P \in \D,\eta \neq 1} \langle b, h_P^{\eta} \rangle |P|^{\frac{\alpha}{n}} \langle f_1 \rangle_{P} h_P^{\eta}
\sum_{\substack{Q_2 \in : Q_2 \supsetneq P \\ \epsilon_2 \neq 1}} \langle f_2, h_{Q_2}^{\epsilon_2} \rangle h_{Q_2}^{\epsilon_2}(P) \\
&=\sum_{P \in \D,\eta \neq 1} \langle b, h_P^{\eta} \rangle |P|^{\frac{\alpha}{n}} \langle f_1 \rangle_{P} \langle f_2 \rangle_{P} h_P^{\eta}.
\end{align*}
Then it yields that
\begin{align*}
\big\langle \Delta_{b,2}(f_1,f_2),g \big\rangle
=\Big\langle b,\sum_{P \in \D,\eta \neq 1} \langle g, h_P^{\eta} \rangle |P|^{\frac{\alpha}{n}} \langle f_1 \rangle_{P} \langle f_2 \rangle_{P} h_P^{\eta} \Big\rangle
:=\langle b,\phi \rangle.
\end{align*}
Notice that
\begin{align*}
(S_D \phi)^2
&=\sum_{P \in \D,\eta \neq 1} \big|\langle g, h_P^{\eta} \rangle\big|^2 \Big(|P|^{\frac{\alpha}{n}} \langle f_1 \rangle_{P}
\langle f_2 \rangle_{P}\Big)^2 \frac{\mathbbm{1}_P}{|P|} \\
&\leq (S_{\D} g)^2 \cdot \M_{\alpha}(f_1,f_2)^2,
\end{align*}
which parallels with $(\ref{I-alpha-f1-f2})$. By Theorem $\ref{M-alpha-I-alpha}$, it yields that
$$
\big\| \Delta_{b,2}(f_1,f_2) \big\|_{L^q(\Rn)}
\lesssim ||b||_{BMO^2_{\D}} \big\| f_1 \big\|_{L^{p_1}(\Rn)} \big\| f_2 \big\|_{L^{p_2}(\Rn)}.
$$
\qed
\subsection{Estimate of $\Xi_b$}
In order to get the two-weight inequality of $\Xi_b$, we only need to note that
\begin{align*}
\Xi_b(f_1,f_2)
&=\sum_{Q_1 \in \D,\epsilon_1 \neq 1} \langle b,h_{Q_1}^{\epsilon_1} \rangle \langle f_1,h_{Q_1}^{\epsilon_1} \rangle |Q_1|^{\frac{\alpha}{n}} \Big(\sum_{\substack{Q_2 \in \D:Q_2 \supsetneq Q_1 \\ \epsilon_2 \neq 1}} \langle f_2,h_{Q_2}^{\epsilon_2} \rangle h_{Q_2}^{\epsilon_2}(Q_1)\Big) \frac{\mathbbm{1}_{Q_1}}{|Q_1|} \\
&=\sum_{Q_1 \in \D,\epsilon_1 \neq 1} \langle b,h_{Q_1}^{\epsilon_1} \rangle \langle f_1,h_{Q_1}^{\epsilon_1} \rangle \langle f_2 \rangle_{Q_1} |Q_1|^{\frac{\alpha}{n}} \frac{\mathbbm{1}_{Q_1}}{|Q_1|} \\
&=\Lambda_{b,=}^{3,1}(f_2,f_1).
\end{align*}
\qed
\subsection{Estimate of $\Theta_b$}
Using averaging identity $(\ref{average})$ again, we obtain
\begin{align*}
\Theta_b(f_1,f_2)
&=\sum_{Q_1 \in \D,\epsilon_1 \neq 1} \langle b,h_{Q_1}^{\epsilon_1} \rangle \langle f_1,h_{Q_1}^{\epsilon_1} \rangle \sum_{Q \in \D:Q \supsetneq Q_1}
|Q|^{\frac{\alpha}{n}} \Big(\sum_{\substack{Q_2 \in \D: Q_2 \supsetneq Q \\ \epsilon_2 \neq 1}} \langle f_1,h_{Q_2}^{\epsilon_2} \rangle h_{Q_2}^{\epsilon_2}(Q)\Big) \frac{\mathbbm{1}_{Q}}{|Q|} \\
&=\sum_{Q_1 \in \D,\epsilon_1 \neq 1} \langle b,h_{Q_1}^{\epsilon_1} \rangle \langle f_1,h_{Q_1}^{\epsilon_1} \rangle \sum_{Q \in \D:Q \supsetneq Q_1}
|Q|^{\frac{\alpha}{n}} \langle f_2 \rangle_Q \frac{\mathbbm{1}_{Q}}{|Q|}.
\end{align*}
Hence, we have
\begin{align*}
\big\langle \Theta_b(f_1,f_2),g \big\rangle
=\langle b,\gamma \rangle
:= \Big\langle b, \sum_{Q_1 \in \D,\epsilon_1 \neq 1} \langle f_1,h_{Q_1}^{\epsilon_1} \rangle \sum_{Q \in \D:Q \supsetneq Q_1}
|Q|^{\frac{\alpha}{n}} \langle f_2 \rangle_Q \langle g \rangle_Q h_{Q_1}^{\epsilon_1}\Big\rangle,
\end{align*}
and
\begin{align*}
(S_D \gamma)^2
&= \sum_{Q_1 \in \D,\epsilon_1 \neq 1} \big|\langle f_1,h_{Q_1}^{\epsilon_1} \rangle\big|^2 \Big(\sum_{Q \in \D:Q \supsetneq Q_1}
|Q|^{\frac{\alpha}{n}} \langle f_2 \rangle_Q \langle g \rangle_Q \Big)^2 \frac{\mathbbm{1}_{Q_1}}{|Q_1|} \\
&\lesssim \sum_{Q_1 \in \D,\epsilon_1 \neq 1} \big|\langle f_1,h_{Q_1}^{\epsilon_1} \rangle\big|^2 \Big(\sum_{Q \in \D}
|Q|^{\frac{\alpha}{n}} \langle |f_2| \rangle_Q \langle |g| \rangle_Q \Big)^2 \frac{\mathbbm{1}_{Q_1}}{|Q_1|} \\
&\leq (S_{\D} f_1)^2 \cdot \I_{\alpha}(|f_2|,|g|)^2.
\end{align*}
This inequality is similar to $(\ref{SD-psi})$, since Lemma $\ref{SD-f}$ holds. Therefore, one may obtain that
$$
\big\| \Theta_{b}(f_1,f_2) \big\|_{L^q(\Rn)}
\lesssim ||b||_{BMO^2_{\D}} \big\| f_1 \big\|_{L^{p_1}(\Rn)} \big\| f_2 \big\|_{L^{p_2}(\Rn)}.
$$
\qed
\section{Characterization of Bilinear Commutator}
By symmetry, it is enough to show the result for $[b,\I_{\alpha}^{\D}]_1$.
\subsection{Upper Bound for Bilinear Commutator}
Applying the decomposition with respect to Haar functions $(\ref{decomposition})$, we get
$$
[b,\I_\alpha^{\D}]_1(f_1,f_2)
=\sum_{P,Q_1,Q_2 \in \D} \sum_{\eta,\epsilon_1,\epsilon_2 \neq 1} \langle b,h_P^{\eta} \rangle \langle f_1, h_{Q_1}^{\epsilon_1} \rangle
\langle f_2,h_{Q_2}^{\epsilon_2} \rangle [h_P^{\eta},\I_{\alpha}^{\D}]_1(h_{Q_1}^{\epsilon_1},h_{Q_2}^{\epsilon_2}).
$$
First, one may Observe that
\begin{align*}
\I_{\alpha}^{\D}(h_{Q_1}^{\epsilon_1},h_{Q_2}^{\epsilon_2})
&=\sum_{Q \in \D} |Q|^{\frac{\alpha}{n}} \langle h_{Q_1}^{\epsilon_1} \rangle_Q
\langle h_{Q_2}^{\epsilon_2} \rangle_Q \mathbbm{1}_Q \\
&=\Big( \sum_{\substack{Q \in \D \\ Q \subsetneq Q_1 \cap Q_2}} |Q|^{\frac{\alpha}{n}} \mathbbm{1}_Q \Big)
h_{Q_1}^{\epsilon_1}  h_{Q_2}^{\epsilon_2} \\
&=c_\alpha |Q_1 \cap Q_2|^{\frac{\alpha}{n}} h_{Q_1}^{\epsilon_1}  h_{Q_2}^{\epsilon_2}.
\end{align*}
A simple calculation yields that
\begin{align*}
\I_{\alpha}^{\D}(\mathbbm{1}_{Q_1},h_{Q_2}^{\epsilon_2})
&=\sum_{Q \in \D:Q \subsetneq Q_2} |Q|^{\frac{\alpha}{n}} \frac{|Q_1 \cap Q|}{|Q|}
h_{Q_2}^{\epsilon_2} \mathbbm{1}_Q \\
&=|Q_1|^{\frac{\alpha}{n}} h_{Q_2}^{\epsilon_2} \mathbbm{1}_{Q_1} \mathbbm{1}_{\{Q_1 \subsetneq Q_2\}}
+ \Big( \sum_{\substack{Q \in \D \\ Q \subsetneq Q_1 \cap Q_2}} |Q|^{\frac{\alpha}{n}} \mathbbm{1}_Q \Big) h_{Q_2}^{\epsilon_2} \\
&\quad\quad + |Q_1| \sum_{Q:Q_1 \subsetneq Q \subsetneq Q_2} |Q|^{\frac{\alpha}{n}} h_{Q_2}^{\epsilon_2}  \frac{\mathbbm{1}_Q}{|Q|} \\
&=(1+c_\alpha)|Q_1 \cap Q_2|^{\frac{\alpha}{n}} h_{Q_2}^{\epsilon_2} \mathbbm{1}_{Q_1}
+ |Q_1| \sum_{Q:Q_1 \subsetneq Q \subsetneq Q_2} |Q|^{\frac{\alpha}{n}} h_{Q_2}^{\epsilon_2}  \frac{\mathbbm{1}_Q}{|Q|}.
\end{align*}
Thus, we have
\begin{align*}
&[h_P^{\eta},\I_{\alpha}^{\D}]_1(h_{Q_1}^{\epsilon_1},h_{Q_2}^{\epsilon_2}) \\
&=\begin{cases}
c_\alpha h_P^{\eta} h_{Q_1}^{\epsilon_1}  h_{Q_2}^{\epsilon_2}\big(|Q_1 \cap Q_2|^{\frac{\alpha}{n}}
- |P \cap Q_2|^{\frac{\alpha}{n}}\big),\ \ &\text{if}\ P \subsetneq Q_1;\\
-|Q_1|^{\frac{\alpha}{n}} h_{Q_2}^{\epsilon_2} \mathbbm{1}_{\{Q_1 \subsetneq Q_2\}} \frac{\mathbbm{1}_{Q_1}}{|Q_1|}
- \sum\limits_{Q:Q_1 \subsetneq Q \subsetneq Q_2} |Q|^{\frac{\alpha}{n}} h_{Q_2}^{\epsilon_2}  \frac{\mathbbm{1}_Q}{|Q|},\ \
&\text{if}\ P = Q_1 \text{ and}\ \epsilon_1 = \eta \\
0,\ \ &\text{otherwise}.
\end{cases}
\end{align*}
Consequently, it yields that
$$
[b,\I_\alpha^{\D}]_1(f_1,f_2)
=c_\alpha \Lambda_b(f_1,f_2)
-c_\alpha \Delta_b(f_1,f_2)
-\Xi_b(f_1,f_2)
-\Theta_b(f_1,f_2).
$$
From Proposition $\ref{Pro-Pra}$ and the dyadic version of the inequality $(\ref{BMO-BMO})$, it follows that
$$
\big\| [b,\I_\alpha^{\D}]_1 \big\|_{L^{p_1}(\Rn) \times L^{p_2}(\Rn) \rightarrow L^q(\Rn)}
\lesssim ||b||_{BMO_{\D}}.
$$
\qed
\subsection{Lower Bound for Bilinear Commutator}
We will follow the scheme of the proof in \cite{L} to track the precise constants. Let $||b||_{BMO_{\D}^2}=1$ and the dyadic cube $J$ satisfying
$$
\frac{1}{|J|}\sum_{\substack{I: I \subset J \\ \epsilon \neq 1}} |\langle b,h_I^{\epsilon}\rangle|^2 \geq \frac12.
$$
By the John-Nirenberg estimates, for $\frac{1}{p}=\frac{1}{p_1} + \frac{1}{p_2}$, we have
$$
1 \lesssim |J|^{-1/p} \Big\| \sum_{\substack{I: I \subset J \\ \epsilon \neq 1}} \langle b,h_I^{\epsilon} \rangle  h_I^{\epsilon} \Big\|_{L^p(\Rn)}
  \lesssim |J|^{-1/q} \Big\| \sum_{\substack{I: I \subset J \\ \epsilon \neq 1}} \langle b,h_I^{\epsilon} \rangle  h_I^{\epsilon} \Big\|_{L^q(\Rn)}
  \lesssim 1.
$$
We split the function $b=b'+b''$, where
$$
b'=\sum_{\substack{I: \ell(I) \leq \ell(J) \\ \epsilon \neq 1}} \langle b,h_I^{\epsilon}\rangle h_I^{\epsilon}.
$$
Then, for any $x \in J$, we get
$$
[b,\I_\alpha^{\D}]_1(\mathbbm{1}_J, \mathbbm{1}_J)(x)
=b' \I_\alpha^{\D}(\mathbbm{1}_J, \mathbbm{1}_J)(x) - \I_\alpha^{\D}(b' \mathbbm{1}_J, \mathbbm{1}_J)(x).
$$
Furthermore, it is easy to check that
$$
b' \I_\alpha^{\D}(\mathbbm{1}_J, \mathbbm{1}_J)(x)
= (1+c_{\alpha}) |J|^{\frac{\alpha}{n}} \sum_{\substack{I: I \subset J \\ \epsilon \neq 1}} \langle b,h_I^{\epsilon}\rangle h_I^{\epsilon},
$$
and
$$
\I_\alpha^{\D}(b' \mathbbm{1}_J, \mathbbm{1}_J)(x)
= (1+c_{\alpha}) \sum_{\substack{I: I \subset J \\ \epsilon \neq 1}} |I|^{\frac{\alpha}{n}} \langle b,h_I^{\epsilon}\rangle h_I^{\epsilon}.
$$
Thus, we have
$$
\big\| [b,\I_\alpha^{\D}]_1(\mathbbm{1}_J, \mathbbm{1}_J) \big\|_{L^q(\Rn)}
\gtrsim |J|^{\frac{\alpha}{n}} \Big\| \sum_{\substack{I: I \subset J \\ \epsilon \neq 1}} \langle b,h_I^{\epsilon}\rangle h_I^{\epsilon} \Big\|_{L^q(\Rn)}
\gtrsim |J|^{\frac{\alpha}{n}+\frac1q}.
$$
On the other hand, it holds that
$$
\big\| [b,\I_\alpha^{\D}]_1(\mathbbm{1}_J, \mathbbm{1}_J) \big\|_{L^q(\Rn)}
\lesssim |J|^{\frac1p} \big\| [b,\I_\alpha^{\D}]_1 \big\|_{L^{p_1}(\Rn) \times L^{p_2}(\Rn) \rightarrow L^q(\Rn)} .
$$
Therefore, we deduce that
$$
\big\| [b,\I_\alpha^{\D}]_1 \big\|_{L^{p_1}(\Rn) \times L^{p_2}(\Rn) \rightarrow L^q(\Rn)}
\gtrsim 1 = ||b||_{BMO_{\D}^2} \simeq ||b||_{BMO_{\D}}.
$$
\qed

\end{document}